\theoremstyle{definition}
\newtheorem{theorem}{Theorem}[section]
\newtheorem{example}{Example}[subsection]
\newtheorem{lemma}[theorem]{Lemma}
\newtheorem{corollary}[theorem]{Corollary}
\def\IC{\mathbb{C}}
\def\IR{\mathbb{R}}
\def\IS{\mathbb{S}}
\def\IT{\mathbb{T}}
\def\B{\mathcal{B}}
\def\H{\mathcal{H}}
\def\K{\mathcal{K}}
\def\O{\mathcal{O}}
\def\R{\mathcal{R}}
\def\R{\mathcal{R}}
\newcommand\norm[1]{\left\| #1 \right\|}
\newcommand{\comment}[1]{}
\renewcommand\eqs[1]{equation \eqref{eq:#1}}
\newcommand{\rank}{\operatorname{rank}}
\renewcommand{\d}{\partial}
\begin{document}
\title{Lifting Sylvester equations: singular value decay for non-normal coefficients}

\author{Rapha\"{e}l Clou\^{a}tre}

\address{Department of Mathematics, University of Manitoba, Winnipeg, Manitoba, Canada R3T 2N2}

\email{raphael.clouatre@umanitoba.ca}

\author{Brock Klippenstein}
\email{klippe11@myumanitoba.ca}

\author{Richard Mika\"{e}l Slevinsky}
\email{richard.slevinsky@umanitoba.ca}

\thanks{R.C. was partially supported by an NSERC Discovery Grant (RGPIN-03600-2022). RMS was partially supported by an NSERC Discovery Grant (RGPIN-2017-05514)}

\maketitle

\begin{abstract}
We aim to find conditions on two Hilbert space operators $A$ and $B$ under which the expression $AX-XB$ having low rank forces the operator $X$  itself to admit a good low rank approximation. It is known that this can be achieved when $A$ and $B$ are normal and have well-separated spectra.  In this paper, we relax this normality condition, using the idea of operator dilations. The basic problem then becomes the lifting of Sylvester equations,  which is reminiscent of the classical commutant lifting theorem and its variations. Our approach also allows us to show that the (factored) alternating direction implicit method for solving Sylvester equaftions $AX-XB=C$ does not require too many iterations, even without requiring $A$ to be normal.
\end{abstract}

\section{Introduction}
When solving linear two-dimensional partial differential equations numerically \cite{townsend2015automatic}, one can start by discretizing the equation to obtain a matrix equation of the form
\begin{equation}\sum_{k=1}^N A_kXB_k=C,\label{eq:generalizedsylvester}\end{equation}
where $X$ is a discretized approximation of the solution and $A_k$,  $B_k$, and $C$ depend on the structure of the differential equation.  The \eqs{generalizedsylvester} is known as a generalized Sylvester equation.  Given a simpler linear two-dimensional partial differential equation such as the heat or wave equation \cite{townsend2015automatic,fortunato2020fast}, the corresponding matrix equation can be expressed as
\begin{equation}AX-XB=C.\label{eq:sylvester}\end{equation}
This is known as a Sylvester equation with coefficients $A$ and $B$, right-hand side $C$, and solution $X$.  
We refer the interested reader to \cite{bhatia1997and} for a survey on Sylvester equations.

Additional applications of Sylvester equations arise in \cite{klippenstein2022fast} where they are used to find the connection coefficients between families of orthogonal polynomials.  Certain classes of matrices such as Cauchy matrices solve very well-structured Sylvester equations with so-called low displacement rank \cite{beckermann2019bounds}.

Our starting point  is a result of Beckermann and Townsend. Before we can state it, we introduce some notation. Given a bounded linear operator $T$ on a Hilbert space, we denote its spectrum by $\sigma(T)$. When $T$ is compact, we denote by $\{s_n(T)\}$ the countable set of its non-zero singular values, arranged in decreasing order. 
Next, for each integer $k\geq 0$, we denote by $\R_k$ the set of rational functions of the form $p/q$, where both $p$ and $q$ are polynomials of degree at most $k$. Then, given two subsets $E$ and $F$ of the complex plane, the corresponding \emph{Zolotarev number} is defined to be
\begin{equation}Z_k(E,F) = \inf_{r\in\R_k}\frac{\sup_{z\in E} |r(z)|}{\inf_{z\in F}|r(z)|}.
\label{eq:zolotarev}\end{equation}
We can now state the result from \cite{beckermann2019bounds} that we aim to extend. Strictly speaking, the original result is only stated for matrices, but the argument therein adapts verbatim to cover general bounded linear operators on possibly infinite-dimensional Hilbert spaces.

\begin{theorem}\label{T:BT}
Consider the Sylvester equation $AX-XB=C$, where $A,B,C,X$ are bounded linear operators on Hilbert spaces. Assume that $A$ and $B$ are normal, that $X$ is compact, and that $C$ has finite rank equal to $v$. Then, \begin{equation} s_{\ell+vk}(X) \leq Z_k\left(\sigma(A),\sigma(B)\right)s_\ell(X)\label{eq:beckermanntownsend}
\end{equation}
for each positive integer $k,\ell$.
\end{theorem}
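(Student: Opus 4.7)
The plan is to construct, for each rational function $r\in\R_k$ whose poles and zeros avoid the relevant spectra, a rank-$(\ell+vk)$ approximation of $X$ whose error decays like $\|r(A)\|\,\|r(B)^{-1}\|\,s_\ell(X)$, and then take the infimum over $r$ to recover the Zolotarev number.

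The algebraic heart of the argument is the identity
\[
r(A)\,X - X\,r(B) = E, \qquad \rank(E) \leq v k,
\]
valid whenever $r = p/q \in \R_k$ has no poles on $\sigma(A) \cup \sigma(B)$ and no zeros on $\sigma(B)$. To establish it I would decompose $r$ into its polynomial part plus a sum of partial fractions $\gamma_j/(z-\beta_j)^{m_j}$. For the polynomial part, the elementary telescoping
\[
A^n X - X B^n = \sum_{j=0}^{n-1} A^{n-1-j}\,C\,B^j
\]
contributes rank at most $v$ per degree increment. For each simple pole, the analogous identity
\[
(A-\beta_j I)^{-1} X - X (B-\beta_j I)^{-1} = -(A-\beta_j I)^{-1}\,C\,(B-\beta_j I)^{-1}
\]
again costs rank at most $v$, and higher-order poles are handled by iterating this. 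Summing contributions over all poles (with multiplicity) and the polynomial part yields the total rank bound $vk$, matching the degree constraint in $\R_k$.

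With the identity in hand, I rearrange to
\[
X = r(A)\,X\,r(B)^{-1} \;-\; E\,r(B)^{-1},
\]
where $r(B)^{-1}$ is bounded by normality of $B$ together with the zero-free hypothesis, and the second summand still has rank at most $vk$. Applying the Weyl-type inequality $s_{i+j-1}(M+N) \leq s_i(M) + s_j(N)$ with $i = \ell$ and $j = vk+1$ produces
\[
s_{\ell+vk}(X) \leq s_\ell\bigl(r(A)\,X\,r(B)^{-1}\bigr) \leq \|r(A)\|\,s_\ell(X)\,\|r(B)^{-1}\|.
\]
Normality and the Borel functional calculus then give $\|r(A)\| = \sup_{\sigma(A)}|r|$ and $\|r(B)^{-1}\| = 1/\inf_{\sigma(B)}|r|$, so taking the infimum over admissible $r$ produces the Zolotarev number, and a routine density argument extends this to all of $\R_k$.

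I expect the main hurdle to lie in carefully tracking the accumulated rank across the partial fraction expansion: one must verify that the total rank of $r(A)X - Xr(B)$ remains at most $vk$ regardless of how the $k$ units of ``degree'' are distributed between $p$ and $q$, and in particular for rational functions with repeated poles or a dominant polynomial part. A secondary, lighter concern is the density argument needed to extend the infimum from admissible $r$ to all of $\R_k$, but since rational functions with poles or zeros landing in $\sigma(A)\cup\sigma(B)$ form only a meager subset, a small perturbation should suffice.
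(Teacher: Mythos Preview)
Your proposal is correct and follows essentially the same route as the Beckermann--Townsend argument that the paper cites (the paper itself does not reprove Theorem~\ref{T:BT}, but the key algebraic step appears as Lemma~\ref{L:Sylalg}). The only notable difference is in how you establish the rank bound for $r(A)X-Xr(B)$: you propose a partial-fraction decomposition, whereas the paper's route is to observe directly that
\[
r(A)X - Xr(B) \;=\; q(A)^{-1}\bigl(p(A)Xq(B) - q(A)Xp(B)\bigr)q(B)^{-1},
\]
and then show by a single telescoping computation that $p(A)Xq(B)-q(A)Xp(B)=\sum_{j=1}^{k} S_jCT_j$ for suitable operators $S_j,T_j$. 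This polynomial identity sidesteps entirely the bookkeeping you flagged as your main hurdle (tracking rank across polynomial part, simple poles, and repeated poles), so you may prefer it; but your partial-fraction accounting is also sound, since $\deg(\text{polynomial part})+\sum_j m_j=\max(\deg p,\deg q)\le k$.
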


An important generic property of Zolotarev numbers is that the better separated $E$ and $F$ are, the more rapidly $Z_k(E,F)$ decays to zero as $k\to\infty$. Therefore, Theorem \ref{T:BT} implies that if the spectra of $A$ and $B$ are well separated, then the singular values of $X$ decay quickly. This is advantageous, in light of the classical Eckart-Young Theorem \cite{davidsonnest,  eckart1936approximation,  mirsky1960symmetric}. Indeed, using the formula
$$s_{k+1}(X)=\inf_{\rank(R) \leq k} \norm{X-R}$$
we see that rapid decay of the sequence of singular numbers implies that $X$ has a good low rank approximation.
This property is clearly desirable for computational reasons. Further, there are algorithms which have their computational complexity dependent on the rank of certain matrices; see \cite{townsend2014computing} for algorithms solving partial differential equations numerically which are quicker the lower the rank of the forcing.  Additionally, we will see in this paper how the time complexity of the factored alternating direction implicit method is quadratic in general, but in fact is linear if $C$ has low rank and certain conditions on $A$ and $B$ are satisfied.

The basic question motivating our work is whether the normality condition from Theorem \ref{T:BT} can be relaxed. It is easy to see that any kind of special behaviour of the singular values of $X$ certainly cannot be expected unconditionally. Indeed, let $B,C$ and $X$ be square matrices of the same size, with $X$ invertible. Letting $A=(C+XB)X^{-1}$, it follows that $AX-XB=C$, thereby illustrating that $X$ can have full rank regardless of that of $C$. 

In view of this obstruction, the next natural guess may be that fast decay of the singular values of $X$ (when $C$ has low rank) might depend on the proximity of $X$ to normal matrices. It was shown in  \cite{baker2015fast}  that this is not the case, notwithstanding the many possible definitions of distance to normality. 
Taken together, these observations show that the appropriate direction in which to extend Theorem \ref{T:BT} is not obvious at first glance. 

The alternating direct implicit (ADI) method  is an iterative algorithm used to obtain an approximate solution of a Sylvester equation.  If in \eqs{sylvester}, $A$ and $B$ are normal and $X_k$ is the approximate solution after $k$ iterations,  then we have
$$\norm{X-X_k}\leq Z_k\left(\sigma(A),\sigma(B)\right) \norm{X}.$$
This bound is reminiscent of the conclusion of Theorem \ref{T:BT}, and thus one may wonder if the normality condition can be relaxed in this case as well. We will show that this is indeed the case.

The structure of this paper is as follows. Section \ref{zolotarev} is concerned with Zolotarev numbers. In Theorem \ref{lineandcirclebound}, we derive a crucial bound that appears to be new for the Zolotarev numbers over the unit disk and an interval.  Section \ref{prelim} introduces some required background on operator theory. Section \ref{dilatesylvester} contains our main technical results (Theorems \ref{T:mainfinite} and \ref{T:maininfinite}), which show  that a given Sylvester equation can be ``lifted" to another one with better behaved coefficients while preserving the  information that is relevant for our current purposes. 
Notably, this can be viewed as an extension of the so-called commutant lifting theorem or intertwiner lifting theorem \cite{paulsen2002completely}, which corresponds to the case where $C=0$. The proof of our result requires a different approach, however. We offer two applications of our main results. First, in Corollary \ref{fastdecay} we extend Theorem \ref{T:BT} and establish fast decay of singular values for the solution
of a Sylvester equation. Second, in Section \ref{applications},  we show how a Sylvester equation can be solved quickly without requiring normality of both coefficients.  Finally,  Section \ref{numericalresult} illustrates how our theory can be applied to solve certain partial integro-differential equations.

Certain proofs in this paper are abridged versions of those found in the second author's master's thesis~\cite{klippenstein2022singular}. We refer the interested reader to it for further details on the results presented herein.

\section{Zolotarev Numbers}\label{zolotarev}

In \cite{zolotarev1877application},  Zolotarev introduced four foundational problems in polynomial and rational approximation theory.  We discuss the third problem here,  which concerns finding a rational function minimized over one set while maximized over another.  We begin with an explicit formula for the Zolotarev numbers over two intervals given in \cite{beckermann2019bounds, zolotarev1877application}.  Then,  we use that result to obtain an upper bound on the Zolotarev numbers over the unit circle and an interval.

Useful properties of Zolotarev numbers are that they decrease with respect to degree,  and increase with respect to set inclusion. Further,  if $T$ is a M\"{o}bius transform,  then $Z_k\left(T(E),T(F)\right)=Z_k(E,F)$ since $\R_k$ is closed with respect to M\"obius transformations.  If $r_k^*$ is an infimizer in \eqs{zolotarev}, we call it an extremal function for $Z_k(E,F)$.

It is important to determine when the Zolotarev numbers of two sets decay. We will say that two complex sets $E$ and $F$ are \emph{well separated} if there exist constants $C\geq 0$ and $0\leq \alpha<1$, both possibly depending on $E$ and $F$, such that
$$Z_k(E,F) \leq C\alpha^k$$
for each $k$.  As we shall see, two examples of sets which are well separated are disjoint intervals, as well as the unit circle and an interval, assuming they do not intersect.

For special sets such as disjoint real intervals, Zolotarev numbers have been extensively studied, as can be seen in \cite{gonvcar1969zolotarev, saff2013logarithmic, lebedev1977zolotarev, beckermann2019bounds}. However, the result we are most interested in is the following theorem.

\begin{theorem}[Beckermann and Townsend,  \cite{beckermann2019bounds}]\label{lines}
Let $E=[a,b]$ and $F=[c,d]$ be disjoint real intervals.  Denote
$$ \rho = \exp\left(\frac{\pi K(1/\alpha)}{K(\sqrt{1-\alpha^{-2}})}\right),$$
where $K(\cdot)$ is the complete elliptic integral of the first kind,  \cite[Chapter 16]{NIST:DLMF},  and
$$\alpha=-1+2\gamma+2\sqrt{\gamma^2-\gamma} ,\qquad \gamma = \left|\frac{(c-a)(d-b)}{(d-a)(c-b)}\right| > 1.$$
Then,
\begin{equation}Z_k(E,F) =4\rho^{-2k}\prod_{n=1}^\infty \frac{\left(1+\rho^{-8nk}\right)^4}{\left(1+\rho^{4k}\rho^{-8nk}\right)^4}\leq 4\left[\exp\left(\frac{\pi^2}{2\ln (16\gamma)}\right)\right]^{-2k}.\label{eq:bound}\end{equation}
\end{theorem}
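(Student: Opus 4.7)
The plan is to reduce the general two-interval problem to a canonical symmetric configuration via Möbius invariance, invoke the classical solution of Zolotarev's third problem in terms of Jacobi elliptic functions to obtain the exact product formula, and then simplify the resulting expression to extract the closed-form exponential bound.

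First, I would exploit the Möbius invariance of $Z_k$ recalled in the preceding discussion. The (absolute value of the) cross-ratio of the four endpoints $a,b,c,d$ is exactly the quantity $\gamma$ appearing in the statement, and it classifies pairs of disjoint real intervals up to real Möbius transformations. There is therefore a real Möbius map sending $E\cup F$ to a symmetric pair $[-1,-\mu]\cup[\mu,1]$ for a unique $\mu\in(0,1)$ depending only on $\gamma$. Matching cross-ratios gives $\mu=1/\alpha$, which explains the appearance of the parameter $\alpha$ in the formula for $\rho$.

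Second, on the symmetric configuration $[-1,-1/\alpha]\cup[1/\alpha,1]$ the extremal rational function for $Z_k$ is classical and can be written explicitly in terms of the Jacobi sine amplitude $\mathrm{sn}(\cdot,\ell)$ with modulus $\ell=1/\alpha$. A Chebyshev-type equioscillation argument certifies extremality: the candidate has the maximum possible number of alternations on both intervals, so any competitor would have to interpolate it too many times. Evaluating the extremal function at the natural reference points produces a ratio of $\mathrm{sn}$-values, and the standard theta-function product expansion of $\mathrm{sn}$ then yields the stated identity
\begin{equation*}
Z_k(E,F)=4\rho^{-2k}\prod_{n=1}^{\infty}\frac{(1+\rho^{-8nk})^{4}}{(1+\rho^{4k}\rho^{-8nk})^{4}},
\end{equation*}
with $\rho=\exp(\pi K(1/\alpha)/K(\sqrt{1-\alpha^{-2}}))$; here the appearance of the complete elliptic integrals traces back to the half-periods of $\mathrm{sn}$ at modulus $\ell=1/\alpha$.

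Third, for the simplified bound, each factor in the infinite product satisfies
\begin{equation*}
\frac{1+\rho^{-8nk}}{1+\rho^{4k-8nk}}\leq 1,
\end{equation*}
since $\rho>1$ makes the denominator exceed the numerator. Hence $Z_k(E,F)\leq 4\rho^{-2k}$, and it remains to bound $\rho$ from below by $\exp(\pi^{2}/(2\ln(16\gamma)))$. This reduces to an inequality for the ratio $K(1/\alpha)/K(\sqrt{1-\alpha^{-2}})$, which follows from the classical logarithmic asymptotic $K(\ell)\sim\ln(4/\ell')$ as $\ell\to 1$, combined with the monotonicity of $K$ and the explicit relation $\alpha=-1+2\gamma+2\sqrt{\gamma^{2}-\gamma}$. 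After substitution, the $16\gamma$ factor inside the logarithm emerges from bounding $4/\ell'$ in terms of $\gamma$.

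The step I expect to be the real obstacle is writing down the extremal rational function and verifying the theta-product identity; this is where one must carefully track the modulus transformations that relate $\alpha$, the Jacobi modulus $\ell$, the nome $\rho^{-2}$, and the elliptic integrals $K,K'$. Once the exact identity is in place, trimming the product and estimating the elliptic-integral ratio to produce the clean exponential bound is essentially routine.
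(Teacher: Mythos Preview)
The paper does not actually prove this theorem: it is stated with attribution to Beckermann and Townsend and cited from \cite{beckermann2019bounds} without any argument, so there is no ``paper's own proof'' to compare your proposal against. Your outline---M\"obius reduction to a symmetric pair of intervals via the cross-ratio, invocation of the classical elliptic-function solution of Zolotarev's third problem to get the theta-type product, then trimming the product and estimating the ratio $K/K'$ to obtain the exponential bound---is the standard route taken in the cited reference, and it is a reasonable sketch, but within the present paper the result is simply imported as a black box.
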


Now we relate the Zolotarev numbers over an interval and a circle to the Zolotarev numbers over two real intervals.

\begin{theorem}\label{lineandcirclebound}
Let $E=[a,b]$ for $1<a<b$ and $F=\{z\in\IC:|z|=1\}$. Then for any $\alpha>(\frac{1+a}{1-a})^2$,
$$Z_{2k}(E,F) \leq Z_k\left(\left[0,\frac{1}{\alpha}\right],\left[\frac{1}{\alpha-d^2},\frac{1}{\alpha-c^2}\right]\right)$$
where
$$c=\frac{1+a}{1-a},\qquad d=\frac{1+b}{1-b}.$$
\end{theorem}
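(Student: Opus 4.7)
The plan is to chain a Cayley-type Möbius transform, a squaring substitution, and a second Möbius transform, exploiting Möbius invariance of $Z_k$ together with the fact that $r\mapsto r(z^2)$ embeds $\R_k$ into $\R_{2k}$.

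I would begin by applying $T(z)=(1+z)/(1-z)$, which is increasing on $(1,\infty)$ and so maps $[a,b]$ onto the interval $[c,d]\subset(-\infty,-1)$, while mapping $\{|z|=1\}$ onto $i\IR\cup\{\infty\}$ (the standard Cayley observation, with $T(1)=\infty$). Möbius invariance then gives $Z_{2k}(E,F)=Z_{2k}([c,d],\,i\IR\cup\{\infty\})$. Next, for any $\tilde r\in\R_k$ I set $r(z)=\tilde r(z^2)$, which lies in $\R_{2k}$. Because $z\mapsto z^2$ maps $[c,d]$ onto $[d^2,c^2]$ and $i\IR\cup\{\infty\}$ onto $(-\infty,0]\cup\{\infty\}$, one has
$$\sup_{z\in[c,d]}|r(z)|=\sup_{w\in[d^2,c^2]}|\tilde r(w)|,\qquad \inf_{z\in i\IR\cup\{\infty\}}|r(z)|=\inf_{w\in(-\infty,0]\cup\{\infty\}}|\tilde r(w)|.$$
Taking the infimum over $\tilde r\in\R_k$ — which traces out only a subfamily of $\R_{2k}$ — yields
$$Z_{2k}([c,d],\,i\IR\cup\{\infty\})\ \leq\ Z_k([d^2,c^2],\,(-\infty,0]\cup\{\infty\}).$$

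For the third step I apply the Möbius transform $S(w)=1/(\alpha-w)$, which is well-defined and increasing on $(-\infty,\alpha)$ thanks to the hypothesis $\alpha>c^2\geq d^2$. Since $S(\pm\infty)=0$ and $S(0)=1/\alpha$, we obtain $S((-\infty,0]\cup\{\infty\})=[0,1/\alpha]$ and $S([d^2,c^2])=[1/(\alpha-d^2),\,1/(\alpha-c^2)]$, and Möbius invariance converts the previous bound into
$$Z_k\!\left(\left[\tfrac{1}{\alpha-d^2},\tfrac{1}{\alpha-c^2}\right],\,[0,1/\alpha]\right).$$
Finally, I would invoke the symmetry $Z_k(E',F')=Z_k(F',E')$ — an immediate consequence of the fact that $r\mapsto 1/r$ is a bijection of $\R_k\setminus\{0\}$ that leaves the ratio $\sup_{E'}|r|/\inf_{F'}|r|$ invariant under swapping $E'\leftrightarrow F'$ — to exchange the two sets and arrive at the stated right-hand side.

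The main technical obstacle is the squaring step, which is not a Möbius transformation and therefore provides only an inequality rather than an equality; one must carefully verify that $z\mapsto z^2$ identifies $[c,d]$ and $i\IR\cup\{\infty\}$ with their squared images without any stray points leaking into the sup or inf sets, and in particular handle the point at infinity introduced by $T(1)=\infty$. Once this bookkeeping is settled, the rest of the argument is a routine application of Möbius invariance and the elementary symmetry $Z_k(E',F')=Z_k(F',E')$.
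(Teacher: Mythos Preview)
Your proposal is correct and follows essentially the same route as the paper: a Cayley transform, a squaring step (the only genuine inequality), and a second M\"obius map. The only cosmetic differences are that the paper multiplies the Cayley transform by $i$ (so the circle lands on the real line and $[a,b]$ on the imaginary axis rather than vice versa), uses $T_2(z)=1/(z+\alpha)$ in place of your $S(w)=1/(\alpha-w)$, and absorbs the $E'\leftrightarrow F'$ symmetry implicitly rather than invoking it at the end.
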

\begin{proof}
Consider the M\"{o}bius transformation given by
$$T_1(z) = \frac{1+z}{1-z}i.$$
For a real number $\phi$, we find
$$T_1(e^{i\phi}) = \frac{\sin \phi}{\cos\phi-1},$$
and thus $T_1$ sends the unit circle to the extended real line with $T_1(1)= \infty$. Additionally,  $T_1$ maps the extended real line to the extended imaginary axis.  Furthermore, if $x>1$, then $T_1(x)$ is on the negative imaginary axis. Thus, our original problem turns into finding the Zolotarev numbers where $G=\IR$ is the set of real numbers, and $H=i[c,d]$ where $c<d<0$.
Put $f(z)=z^2$. Then 
\begin{equation}Z_{2k}(G,H) \leq Z_k(f(G),f(H))=Z_k\left([0,\infty),\left[-c^2, -d^2\right]\right).\label{eq:squarelines}\end{equation}
Finally, given any $\alpha >c^2$ the M\"obius transformation
$$T_2(z) = \frac{1}{z+\alpha},$$
confirms that
$$Z_k\left([0,\infty),\left[-c^2,-d^2\right]\right) =Z_k\left(\left[0,\frac{1}{\alpha}\right],\left[\frac{1}{\alpha-d^2},\frac{1}{\alpha-c^2}\right]\right).$$
\end{proof}

By combining the two previous theorems,  we obtain a relatively simple bound on the Zolotarev numbers over the unit circle and an interval.

\begin{corollary}\label{lineandcircleboundcorollary}
Let $E=[a,b]$ for $1<a<b$ and $F=\{z\in\IC:|z|=1\}$. Then,
$$Z_{2k}(E,F)  \leq 4\left[\exp\left(\frac{\pi^2}{2\ln (16\gamma)}\right)\right]^{-2k} ,\qquad \gamma = \left[\frac{(a+1)(1-b)}{(1-a)(b+1)}\right]^2 > 1.$$
\end{corollary}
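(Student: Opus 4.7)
The strategy is to chain the two previous results: first use Theorem \ref{lineandcirclebound} to reduce $Z_{2k}(E,F)$ to the Zolotarev number of a pair of disjoint real intervals, then invoke Theorem \ref{lines} on those intervals and check that the resulting constant $\gamma$ matches the one in the statement.

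Fix any $\alpha > c^2$, with $c=(1+a)/(1-a)$ and $d=(1+b)/(1-b)$ as in Theorem \ref{lineandcirclebound}. Set
$$E' = \left[0, \frac{1}{\alpha}\right], \qquad F' = \left[\frac{1}{\alpha - d^2}, \frac{1}{\alpha - c^2}\right].$$
Since $1<a<b$ gives $c<d<0$ and $c^2>d^2$, both $E'$ and $F'$ are well-defined disjoint real intervals, and Theorem \ref{lineandcirclebound} yields $Z_{2k}(E,F) \leq Z_k(E',F')$.

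Next I would apply Theorem \ref{lines} to $Z_k(E',F')$. The main calculation is to simplify the parameter
$$\gamma' = \left|\frac{(c'_0 - a'_0)(d'_0 - b'_0)}{(d'_0 - a'_0)(c'_0 - b'_0)}\right|$$
where $a'_0=0$, $b'_0=1/\alpha$, $c'_0 = 1/(\alpha-d^2)$, $d'_0 = 1/(\alpha-c^2)$. Using
$$d'_0 - \frac{1}{\alpha} = \frac{c^2}{\alpha(\alpha - c^2)}, \qquad c'_0 - \frac{1}{\alpha} = \frac{d^2}{\alpha(\alpha - d^2)},$$
a short computation collapses the $\alpha$-dependent factors and leaves $\gamma' = c^2/d^2$. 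Substituting the definitions of $c$ and $d$ gives precisely
$$\gamma' = \left[\frac{(a+1)(1-b)}{(1-a)(b+1)}\right]^2 = \gamma,$$
and the monotonicity of $t\mapsto (t+1)/(t-1)$ on $(1,\infty)$ confirms $\gamma>1$. Note in particular that $\gamma'$ is independent of the auxiliary parameter $\alpha$, so no optimization in $\alpha$ is required.

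Plugging $\gamma' = \gamma$ into the explicit bound in Theorem \ref{lines} gives
$$Z_k(E', F') \leq 4\left[\exp\left(\frac{\pi^2}{2\ln(16\gamma)}\right)\right]^{-2k},$$
and combining with the first inequality finishes the proof. The only nontrivial step is the algebraic simplification showing $\gamma' = c^2/d^2$; I expect all remaining manipulations to be routine.
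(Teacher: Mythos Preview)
Your proposal is correct and is exactly the approach the paper has in mind: the paper simply states that the corollary follows ``by combining the two previous theorems,'' and you have carried out that combination explicitly, including the key algebraic check that the cross-ratio parameter $\gamma'$ for the two auxiliary intervals simplifies to $c^2/d^2$ (independently of $\alpha$) and hence matches the claimed $\gamma$.
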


It should be noted that the bound given by Theorem \ref{lineandcirclebound} is not optimal.  Although the majority of the proof consists of equalities, the step that involves squaring the orthogonal lines gives rise to an inequality. To illustrate the sharpness of the bound, we set
$$r_1(z) = \frac{z-w}{z-\frac{1}{w}},\qquad w= \frac{a+b}{2}.$$
One can show that this implies
$$Z_1(E,F) \leq \frac{\frac{b}{a}-1}{a+b-\frac{2}{a}}.$$
For $k>1$, we can take $r_k = r_1^k$ to obtain
\begin{equation}Z_k(E,F) \leq \left(\frac{\frac{b}{a}-1}{a+b-\frac{2}{a}}\right)^k.\label{eq:bound2}\end{equation}
Figure \ref{fig:comparisons} compares the bounds given by Theorem \ref{lineandcirclebound} and Corollary \ref{lineandcircleboundcorollary} to the bound given in \eqs{bound2}.

\begin{figure}[H]
\centering
\includegraphics[scale=0.5]{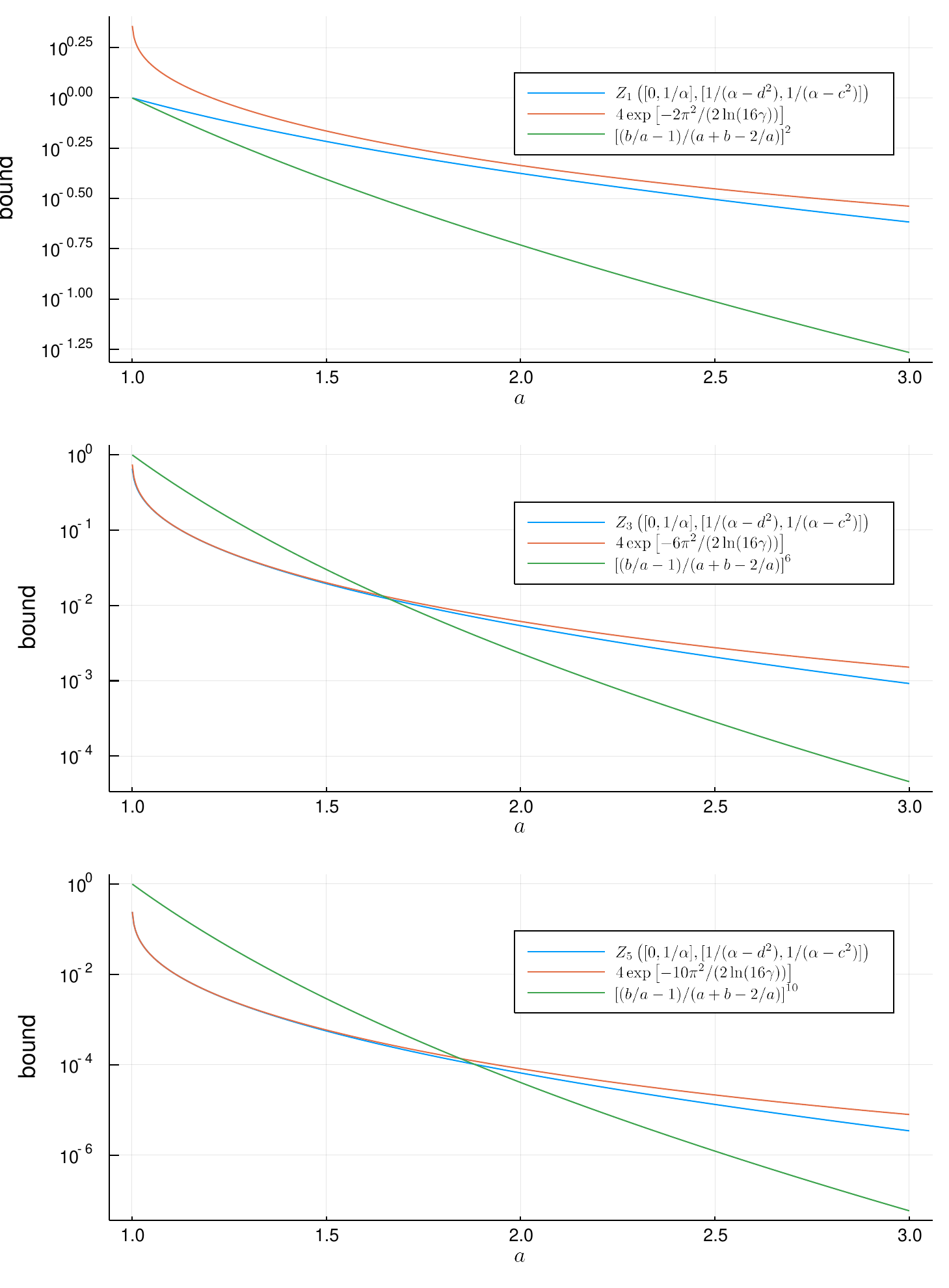}
\caption{These plots compare the bounds on $Z_{2k}(E,F)$ derived in Theorem \ref{lineandcirclebound},  Corollary \ref{lineandcircleboundcorollary},  and \eqs{bound2}. The top, middle and bottom compare the bounds for $k=1,3,5$, respectively. We vary $a$ and fix $b=10$. }
\label{fig:comparisons}
\end{figure}

\section{Operator theoretic prelimineries}\label{prelim}

Throughout this paper, $\H$ will denote a Hilbert space. Given another Hilbert space $\H'$ and a linear operator $T:\H\to\H'$, its norm is defined as
$$\norm{T}=\sup_{0\ne x\in\H}\frac{\norm{Tx}}{\norm{x}}.$$
The space of bounded linear operators from $\H$ to $\H'$ will be denoted by $\B(\H,\H')$; when $\H=\H'$ we simply write $\B(\H)$.
Sometimes we will restrict our attention to finite-dimensional spaces, in which case operators can be interpreted as matrices.  For positive integers $m$ and $n$, we let $\IC^{m\times n}$ denote the space of $m\times n$ complex matrices. 

We record a standard fact for future use.

\begin{lemma}\label{L:spec}
Let $T\in\B(\H)$. Then, $\sigma(T)\subset \{z\in \IC:|z|>1\}$ if and only if the series $\sum_{n=0}^\infty \|T^{-n}\|$ converges.
\end{lemma}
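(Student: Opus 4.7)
The plan is to argue that both conditions amount to $r(T^{-1})<1$, where $r(\cdot)$ denotes the spectral radius, with Gelfand's formula $r(T^{-1}) = \lim_{n\to\infty}\|T^{-n}\|^{1/n}$ doing most of the heavy lifting in one direction and a Neumann-type series doing the work in the other.

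For the forward direction, suppose $\sigma(T)\subset\{|z|>1\}$. Since $0\notin\sigma(T)$, $T$ is invertible, and the spectral mapping theorem gives $\sigma(T^{-1}) = \{1/z : z\in\sigma(T)\}\subset\{|w|<1\}$. Compactness of the spectrum then forces $r(T^{-1})<1$, so Gelfand's formula supplies some $\rho\in(r(T^{-1}),1)$ with $\|T^{-n}\|\le\rho^n$ for all large $n$; comparison with a geometric series yields convergence of $\sum_{n\ge 0}\|T^{-n}\|$.

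For the converse, assume the series converges; note that this implicitly requires $T$ to be invertible so the terms $T^{-n}$ are defined for $n\ge 1$. I would deliberately avoid trying to reverse-engineer Gelfand's formula, since summability of a sequence does \emph{not} in general force $\limsup\|T^{-n}\|^{1/n}<1$. Instead I would introduce a parameter $z\in\overline{\ID}$ and set $S_z := \sum_{n=0}^{\infty} z^n T^{-n}$; the bound $\|z^n T^{-n}\|\le\|T^{-n}\|$ makes this series absolutely convergent uniformly on $\overline{\ID}$. A termwise telescoping then gives $(I-zT^{-1})S_z = I = S_z(I-zT^{-1})$, so $I-zT^{-1}$ is invertible for every $|z|\le 1$. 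Factoring $T-zI = T(I-zT^{-1})$ and using invertibility of $T$ shows that $T-zI$ is invertible throughout the closed unit disk, hence $\sigma(T)\subset\{|z|>1\}$.

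The forward direction is routine once one invokes spectral mapping and Gelfand. The main obstacle, if one insists on symmetry of the two directions, lies in the converse: the naive root-test approach fails because summability only yields $\limsup \|T^{-n}\|^{1/n}\le 1$. Introducing the $z$-parameter and handling the whole closed disk through a single Neumann series in $z$ sidesteps this delicacy cleanly.
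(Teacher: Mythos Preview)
Your proof is correct. The forward direction is exactly the paper's argument (spectral mapping plus Gelfand's formula plus geometric comparison). For the converse, however, you take a genuinely different route from the paper.

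The paper's one-line proof just says ``spectral radius formula along with the root test.'' Unpacked, the intended converse argument is: convergence of $\sum\|T^{-n}\|$ forces $\|T^{-n}\|\to 0$, and since $\|T^{-n}\|\ge r(T^{-n})=r(T^{-1})^n$ for every $n$ (either by spectral mapping or by the $\inf$-form of Gelfand's formula, $r(S)=\inf_n\|S^n\|^{1/n}$), this forces $r(T^{-1})<1$. So your concern that ``summability only yields $\limsup\|T^{-n}\|^{1/n}\le 1$'' is valid as a criticism of a \emph{pure} root-test argument, but the spectral radius formula supplies the missing pointwise inequality $\|T^{-n}\|\ge r(T^{-1})^n$, which closes the gap. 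Your alternative---building the Neumann series $S_z=\sum_{n\ge 0} z^n T^{-n}$ and checking it inverts $I-zT^{-1}$ uniformly on the closed disk---is a clean, constructive way to bypass that subtlety entirely, and it has the virtue of exhibiting the resolvent explicitly rather than inferring its existence from a spectral-radius bound.
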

\begin{proof}
Observe that $\sigma(T^{-1})=\{1/\lambda:\lambda\in \sigma(T)\}$. The result then follows from the  spectral radius formula along with the root test for convergence.
\end{proof}

\subsection{Dilations}

The main idea behind our main contributions  is that of a dilation. Let $\K$ be a Hilbert space containing a closed subspace $\H$. Let $P:\K\to \H$ denote the orthogonal projection. Let $B$ be an operator on $\K$ and $A$ be an operator on $\H$. We say that $B$ is a \emph{dilation} of $A$ if
$
P B|_{\H}=A.
$
This can be reformulated as follows. 

Decompose $\K$ as $\H\oplus \H^\perp.$ With respect to this decomposition, $B$ has a $2\times 2$ block matrix decomposition. The fact that $B$ is a dilation of $A$ simply says that the $(1,1)$ block is given by $A$, that is
$$B=\begin{pmatrix}A&*\\
*& *\end{pmatrix}.$$
Up to scaling down the norm if necessary, it is possible to dilate any operator to a unitary. Indeed, if $\|A\|\leq 1$, we set $K=\H\oplus \H$ and consider the operator
\[
\begin{pmatrix}
A& D_*\\
D&-A^*
\end{pmatrix}, 
\]
where we use the notation  $D=\sqrt{I-A^*A}$ and $D_*=\sqrt{I-AA^*}$. 
Despite its simplicity, this choice of unitary dilation has limited use, since it is unclear just how closely related it is to $A$. For instance, there is no simple formula relating the various powers of $A$ to that of this dilation.

Such a flaw can be remedied, provided that one is willing to enlarge the domain of the dilation. For every positive integer $n$, we let $\H^{(n)}=\H\oplus \ldots \oplus \H$ be the $n$-tuple direct sum of $\H$. We also define an isometric embedding
$
J_n:\H\to \H^{(n)} 
$
as
\[
J_n h=(h,0,\ldots,0).
\]
Identifying $\H$ with $J_n\H$, we see that $\H$ is contained in $\H^{(n)}$.  We define an operator $U_{n,A}:\H^{(n)}\to\H^{(n)}$ as 
\begin{equation}\label{Eq:UnA}
U_{n,A}(h_1,h_2,\ldots,h_n)=(Ah_1+D_*h_n,D h_1 -A^* h_n, h_2, h_3,\ldots,h_{n-1}).
\end{equation}
Equivalently, $U_{n,A}$ has the following block matrix decomposition
\begin{equation}\label{Eq:UnA}
U_{n,A} = 
\begin{pmatrix}
A&0&\dots&0&D_{*}\\
D&0&\dots&0&-A^*\\
0&I&&&0\\
\vdots&&\ddots&&\vdots\\
0&&&I&0
\end{pmatrix}.
\end{equation}
A routine calculation reveals that $U_{n,A}$ is a unitary dilation of $A$. In fact, more is true. Due to the particular structure of the dilation, it follows that $U_{n,A}^k$ is a unitary dilation of $A^k$ for each $0\leq k\leq n-1$ \cite{levy2014dilation}. 

There is an infinite version of this construction that we will also exploit. 
Let $J:\H\to \ell^2(\H)\oplus \ell^2(\H)$ be defined as
\[
Jh=(h,0,0,\ldots) \oplus 0, \quad h\in \H.
\]
This is an isometry, so we may identify $\H$ with $J\H\subset \ell^2(\H)\oplus \ell^2(\H)$.
We first define an operator $V_A:\ell^2(\H)\to \ell^2(\H)$ as
\begin{equation}\label{Eq:VA}
V_A(h_1,h_2,\ldots)=(Ah_1,D h_1, h_2,h_3,\ldots).
\end{equation}
Then, we define another operator $U_A:\ell^2(\H)\oplus \ell^2(\H)\to \ell^2(\H)\oplus \ell^2(\H)$ as
\begin{equation}\label{Eq:UA}
U_A = \begin{pmatrix}
V_A&I-V_AV_A^*\\
0&V_A^*
\end{pmatrix}.
\end{equation}
A standard calculation reveals that $U_A$ is an isometric dilation of $A$. Although the underlying space $\ell^2(\H)$ can be quite large, this dilation enjoys the following important algebraic property:
\[
f(A)=P_{\H} f(U_A)|_\H \quad \text{for every polynomial } f.
\]
This last feature is very appealing. Indeed, the structure of the unitary dilation can be completely understood by means of function theory on the unit circle, using the spectral theorem. In turn, the previous relation shows how information about $U$ can be translated into information about $A$. Deep facts in operator theory can be extracted in this fashion; see \cite{nagy2010harmonic},\cite{paulsen2002completely} and the references therein.

\subsection{Singular values and dilations}

In proving that singular values decay, we will need to compare the growth of singular values of an operator and that of a dilation of it. We thus record the following elementary estimate.

\begin{lemma}\label{L:singvaldil}
Let $\H$ and $\K$ be Hilbert spaces. Let $\H'\subset \K$ be a closed subspace. Let $X\in \B(\H,\H')$ be compact. Let $Y\in \B(\H,\K)$ be a compact operator satisfying $P_{\H'}Y=X$ and let $Z\in \B(\K)$ be an arbitrary compact operator. 
Then, for each pair of integers  $k,\ell$ we have
\[
\frac{s_\ell(X)}{s_{k}(X)} \leq   \left(1+\frac{\|Z-JX\|}{s_k(X)}\right) \frac{(s_\ell(Z)+\|Y-Z\|))}{s_k(Z)}
\]
where $J:\H'\to\K$ is the inclusion map.
\end{lemma}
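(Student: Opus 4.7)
My approach is to clear denominators, reducing the stated bound to the equivalent product inequality
\[
s_\ell(X)\, s_k(Z) \;\leq\; \bigl(s_k(X)+\|Z-JX\|\bigr)\bigl(s_\ell(Z)+\|Y-Z\|\bigr),
\]
and then to obtain this by multiplying two carefully oriented Weyl-type singular value estimates, one contributing each factor on the right. The asymmetry of the target makes the matching between factors and estimates essentially unique: the first factor should come from bounding $s_k(Z)$ above in terms of $s_k(X)$, and the second from bounding $s_\ell(X)$ above in terms of $s_\ell(Z)$.

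For the first factor, I will exploit that $J:\H'\to\K$ is an isometry, so $s_n(JX)=s_n(X)$ for every $n$, and then apply Weyl's perturbation inequality $|s_k(A)-s_k(B)|\leq\|A-B\|$ with $A=Z$ and $B=JX$ to obtain $s_k(Z)\leq s_k(X)+\|Z-JX\|$. For the second factor, I will use the hypothesis $P_{\H'}Y=X$ rewritten as $JX=(JP_{\H'})Y$, noting that $JP_{\H'}\in\B(\K)$ is the orthogonal projection of $\K$ onto $J\H'$ and hence has norm at most one. The submultiplicative estimate $s_\ell(AB)\leq\|A\|\, s_\ell(B)$ for compact $B$ then gives $s_\ell(X)=s_\ell(JX)\leq s_\ell(Y)$, and a second application of Weyl's inequality, this time to $Y=Z+(Y-Z)$, yields $s_\ell(Y)\leq s_\ell(Z)+\|Y-Z\|$. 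Concatenating produces $s_\ell(X)\leq s_\ell(Z)+\|Y-Z\|$. Multiplying the two resulting inequalities and dividing through by $s_k(X)\, s_k(Z)$ (which may be assumed strictly positive, else the lemma is vacuous) gives the claim.

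I do not foresee a genuine obstacle here: the argument is essentially mechanical once the correct orientation of Weyl's inequality is matched to each factor. The only points that merit care are (i) recognising that $JP_{\H'}$ is an orthogonal projection in $\B(\K)$, so that the hypothesis $P_{\H'}Y=X$ translates cleanly into a contraction identity relating $JX$ and $Y$, and (ii) implicitly reading $Z$ as an element of $\B(\H,\K)$, since otherwise the expressions $\|Z-JX\|$ and $\|Y-Z\|$ would not typecheck.
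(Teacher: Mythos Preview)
Your argument is correct and follows essentially the same route as the paper's own proof: both establish $s_k(Z)\leq s_k(X)+\|Z-JX\|$ and $s_\ell(X)\leq s_\ell(Y)\leq s_\ell(Z)+\|Y-Z\|$ via the isometry of $J$, the contraction $P_{\H'}$, and Weyl's perturbation inequality, then combine them. Your remark in (ii) that $Z$ must be read as an element of $\B(\H,\K)$ rather than $\B(\K)$ is well taken and reflects a typo in the statement.
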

\begin{proof}
Invoking \cite[Corollary 1.5]{davidsonnest}, the equality $P_{\H'}Y=X$ implies 
$
s_k(X)\leq s_k(Y)
$ for each $k\geq 1$. Moreover, $s_k(JX)=s_k(X)$ for every $k\geq 1$. Next, observe that
\[
s_k(Z) \leq s_k(X)+\|Z-J X\|, \quad s_k(Y)\leq s_k(Z)+\|Z-Y\|
\]
for each $k\geq 1$.
The first of these inequalities implies that
\[
\frac{1}{s_k(X)}\leq \left(1+\frac{\|Z-JX\|}{s_k(X)}\right) \frac{1}{s_k(Z)}, \quad k\geq 1
\]
which, combined with the second inequality, yields
\begin{align*}
\frac{s_\ell(X)}{s_{k}(X)}&\leq \frac{s_\ell(Y)}{s_{k}(X)}\leq \left(1+\frac{\|Z-JX\|}{s_k(X)}\right) \frac{s_\ell(Y)+\|Y-Z\|}{s_k(Z)}.
\end{align*}
\end{proof}

\subsection{Sylvester equations}

Let $\H$ and $\H'$ be Hilbert spaces. Let $A\in \B(\H'),B\in \B(\H)$. The corresponding \emph{Sylvester operator}  $\IS_{A,B}:\B(\H,\H')\to \B(\H,\H')$ is defined as
\[
\IS_{A,B}(X)=AX-XB, \quad X\in \B(\H,\H').
\]
It is clear that $\IS_{A,B}$ is a bounded linear operator on the Banach space $\B(\H,\H')$. The following result will be used repeatedly.

%
%
%

\begin{lemma}\label{surjective}
Assume that $\|A\|\leq 1$ and $\sigma(B)\subset \{z\in \IC:|z|>1\}$. Then, the following statements hold.
\begin{enumerate}[{\rm a)}]
\item The series $\sum_{n=1}^\infty \|B^{-n}\|$ converges to some number $\gamma>0$.
\item $\IS_{A,B}$ is invertible.
\item For every $X\in B(\H,\H')$ we have
$
\|X\|\leq\gamma  \|\IS_{A,B}(X)\|$.
\end{enumerate}
\end{lemma}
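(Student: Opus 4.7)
The plan is to dispatch (a) as an immediate consequence of Lemma \ref{L:spec}, and then to treat (b) and (c) together by constructing an explicit inverse for $\IS_{A,B}$ as a Neumann-type series. For (a), the hypothesis $\sigma(B)\subset\{z\in\IC:|z|>1\}$ together with Lemma \ref{L:spec} gives convergence of $\sum_{n=0}^\infty \|B^{-n}\|$; dropping the $n=0$ term leaves the positive finite sum $\gamma = \sum_{n=1}^\infty \|B^{-n}\|$.

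For (b) and (c), I would motivate the candidate inverse by rearranging $AX-XB=C$ into the fixed-point form $X = AXB^{-1} - CB^{-1}$ and iterating formally. This leads me to define
$$T:\B(\H,\H')\to\B(\H,\H'),\qquad T(C) = -\sum_{n=0}^\infty A^n C B^{-n-1}.$$
Since $\|A\|\leq 1$, the $n$th term has norm at most $\|C\|\,\|B^{-n-1}\|$, and part (a) makes the series absolutely convergent in operator norm, with the uniform estimate $\|T(C)\|\leq \gamma\|C\|$. The next step is to verify that $T=\IS_{A,B}^{-1}$. Applying $\IS_{A,B}$ termwise (legitimized by absolute convergence) produces a telescoping cancellation
$$\IS_{A,B}(T(C)) = -\sum_{n=0}^\infty A^{n+1} C B^{-n-1} + \sum_{n=0}^\infty A^n C B^{-n} = C,$$
and the symmetric calculation yields $T(\IS_{A,B}(X)) = X$, establishing (b). For (c), substituting $C = \IS_{A,B}(X)$ into the bound $\|T(C)\|\leq\gamma\|C\|$ gives exactly $\|X\|\leq \gamma\|\IS_{A,B}(X)\|$.

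There is no real obstacle here: the only technical point is confirming absolute norm convergence of the defining series so that termwise multiplication by $A$ and $B^{\pm 1}$, along with the re-indexing needed for the telescoping, are both valid. Crucially, the argument uses neither normality nor any finer spectral information beyond what part (a) supplies, relying only on submultiplicativity of the operator norm and the hypothesis $\|A\|\leq 1$.
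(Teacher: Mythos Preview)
Your proposal is correct and follows essentially the same approach as the paper: part (a) via Lemma \ref{L:spec}, and parts (b)--(c) via the explicit series representation $X=-\sum_{n\geq 0}A^n C B^{-n-1}$ together with the bound $\|A^n C B^{-n-1}\|\leq \|C\|\,\|B^{-n-1}\|$. The only difference is that the paper outsources the invertibility of $\IS_{A,B}$ and the series formula to \cite{bhatia1997and}, whereas you derive them directly via the telescoping computation; your version is thus more self-contained but not conceptually different.
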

\begin{proof}
(i) follows immediately from Lemma \ref{L:spec}, while (ii) follows from \cite[page 2]{bhatia1997and}. Next, fix $X\in B(\H,\H')$. We invoke \cite[Theorem 9.1]{bhatia1997and} to see that
\[
X=\sum_{n=0}^\infty A^n \IS_{A,B}(X) B^{-n-1}
\]
so that
\[
\|X\| \leq \gamma \| \IS_{A,B}(X)\| 
\]
as desired.

\end{proof}
%
%
%

The fundamental property of Sylvester equations underlying Theorem \ref{T:BT} and our main results is the following basic algebraic fact.

\begin{lemma}\label{L:Sylalg}
Let $X\in B(\H,\H')$ and put $C=\IS_{A,B}(X)$. Given any polynomials $p$ and $q$ of degree at most $d$, there are operators $S_1,\ldots,S_d\in B(\H')$ and $T_1,\ldots,T_d\in B(\H)$ such that
\[
p(A)Xq(B)-q(A)Xp(B)=\sum_{j=1}^d S_j C T_j.
\]
\end{lemma}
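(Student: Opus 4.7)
The plan is to exploit a two-variable polynomial identity that encodes the Sylvester relation $AX - XB = C$. The starting observation is that the bivariate polynomial
\[
P(z,w) = p(z)q(w) - q(z)p(w)
\]
vanishes identically on the diagonal $z=w$. Since the ideal of the diagonal in $\mathbb{C}[z,w]$ is principal, generated by $z-w$, it follows that $P(z,w)$ is divisible by $z-w$. More concretely, I would write $r(z,w) = (P(z,w))/(z-w)$ and observe that $r$ is a polynomial of degree at most $d-1$ in each variable, so that there exist scalars $c_{ij}$, for $0 \leq i,j \leq d-1$, with
\[
p(z)q(w) - q(z)p(w) \;=\; (z-w)\sum_{i,j=0}^{d-1} c_{ij}\,z^i w^j.
\]

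The next step is to transfer this polynomial identity to the operator setting, being careful that $A$ always acts on the left of $X$ and $B$ always acts on the right (so that noncommutativity causes no trouble). Expanding $(z-w)z^i w^j = z^{i+1}w^j - z^i w^{j+1}$ and substituting yields
\[
p(A) X q(B) - q(A) X p(B) \;=\; \sum_{i,j=0}^{d-1} c_{ij}\bigl(A^{i+1} X B^j - A^i X B^{j+1}\bigr) \;=\; \sum_{i,j=0}^{d-1} c_{ij}\, A^i (AX - XB) B^j,
\]
where the inner difference collapses by associativity. Using $C = AX-XB$, this becomes
\[
\sum_{i=0}^{d-1} A^i\, C\,\Bigl(\sum_{j=0}^{d-1} c_{ij}\, B^j\Bigr).
\]

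Finally, I would set $S_{i+1} = A^i \in \B(\H')$ and $T_{i+1} = \sum_{j=0}^{d-1} c_{ij}\, B^j \in \B(\H)$ for $i = 0,\ldots,d-1$ to obtain the desired decomposition with exactly $d$ terms. No obstacle is really anticipated here: the only nontrivial content is the divisibility of $p(z)q(w) - q(z)p(w)$ by $z-w$, which is immediate from $P(z,z)=0$. The rest is bookkeeping, and the ordering of factors (all powers of $A$ on the left, all powers of $B$ on the right) is exactly what makes the substitution of $A, B$ into the polynomial identity legal despite the fact that $A$ and $B$ need not commute with $X$.
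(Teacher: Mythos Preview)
Your proof is correct and is exactly the routine computation the paper alludes to (the paper gives no details and simply cites \cite[Theorem 2.1]{beckermann2019bounds}, whose argument is precisely the divided-difference factorization $p(z)q(w)-q(z)p(w)=(z-w)r(z,w)$ followed by the substitution $z\mapsto A$, $w\mapsto B$ through $X$). The only point worth making explicit is why $r$ has degree at most $d-1$ in \emph{each} variable: viewing $P(z,w)$ as a polynomial in $z$ with a root at $z=w$ gives $\deg_z r\le d-1$, and then the antisymmetry $P(z,w)=-P(w,z)$ forces $r$ to be symmetric, hence $\deg_w r\le d-1$ as well.
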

\begin{proof}
This follows from a routine computation, exactly as in the proof of \cite[Theorem 2.1]{beckermann2019bounds}.
\end{proof}

This has the following useful consequence.

\begin{lemma}\label{L:sylcomp}
Assume that $A$ and $B$ are normal, and that the sequence of Zolotarev numbers $Z_k(\sigma(A),\sigma(B))$ converges to $0$. If $\IS_{A,B}(X)$ is compact, then so is $X$.
\end{lemma}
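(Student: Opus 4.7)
The plan is to show that $X$ lies in the norm-closure of the compact operators, which is closed, so $X$ itself must be compact. The main tools are Lemma \ref{L:Sylalg} (which encodes how $C = \IS_{A,B}(X)$ governs polynomial expressions in $A$ and $B$ applied to $X$) together with the hypothesis that $Z_k(\sigma(A),\sigma(B)) \to 0$, and the fact that the functional calculus for a normal operator converts sup-norms of rational functions over the spectrum into operator norms.

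First, for each $k$ I would choose a near-extremal rational function $r_k = p_k/q_k \in \R_k$ for the Zolotarev problem, normalized so that $\inf_{z \in \sigma(B)} |r_k(z)| = 1$ and hence $\sup_{z \in \sigma(A)}|r_k(z)|$ is asymptotically bounded by $Z_k(\sigma(A),\sigma(B))$. A small technical wrinkle is to arrange that the polynomial $q_k$ has no zeros in $\sigma(A) \cup \sigma(B)$, which can be ensured by a mild perturbation that alters the Zolotarev ratio by an arbitrarily small amount; this guarantees that $r_k(A), r_k(B)$ are defined via the holomorphic functional calculus and that $q_k(A), q_k(B)$ are invertible.

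Next, I would apply Lemma \ref{L:Sylalg} to $p_k$ and $q_k$ to obtain
$$p_k(A) X q_k(B) - q_k(A) X p_k(B) = \sum_{j=1}^{k} S_j C T_j,$$
whose right-hand side is compact because $C$ is. Multiplying on the left by $q_k(A)^{-1}$ and on the right by $q_k(B)^{-1}$, and using that polynomials in a normal operator commute, this rewrites as
$$r_k(A) X - X r_k(B) = K_k$$
for a compact operator $K_k$. Since $|r_k| \geq 1$ on $\sigma(B)$, the normality of $B$ combined with the spectral theorem gives that $r_k(B)$ is invertible with $\|r_k(B)^{-1}\| = 1$. Rearranging yields
$$X = r_k(A) X r_k(B)^{-1} - K_k r_k(B)^{-1}.$$
The second term is compact. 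The first has norm at most $\|r_k(A)\| \cdot \|X\| \cdot \|r_k(B)^{-1}\|$, which, by the normality of $A$, is asymptotically bounded by $Z_k(\sigma(A),\sigma(B)) \|X\|$, and this tends to $0$ by hypothesis. Hence $X$ is a norm-limit of compact operators.

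The main obstacle, though a mild one, is the pole/zero placement for $r_k$: one must ensure simultaneously that $q_k$ avoids $\sigma(A) \cup \sigma(B)$ and that $p_k$ avoids $\sigma(B)$, so that all the invocations of the functional calculus and the various inversions are legitimate. Once this bookkeeping is handled, the remainder is a clean combination of Lemma \ref{L:Sylalg} with the identification $\|r(T)\| = \sup_{\sigma(T)}|r|$ that holds for normal $T$.
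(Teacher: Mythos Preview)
Your proposal is correct and follows essentially the same approach as the paper: both choose a near-extremal rational function $r=p/q$ for the Zolotarev problem, invoke Lemma \ref{L:Sylalg} to see that $r(A)Xr(B)^{-1}-X$ is compact, and then use the spectral theorem for normal operators to bound $\|r(A)\|\|r(B)^{-1}\|$ by the Zolotarev number, concluding that $X$ is a norm-limit of compacts. The only cosmetic difference is that the paper multiplies the Sylvester identity by $q(A)^{-1}$ on the left and $p(B)^{-1}$ on the right (obtaining $r(A)Xr(B)^{-1}-X$ directly), whereas you multiply by $q_k(A)^{-1}$ and $q_k(B)^{-1}$ and then invert $r_k(B)$; either way the same compact approximant emerges.
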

\begin{proof}
Let $\epsilon>0$ and choose an integer $k$ large enough so that $Z_k(\sigma(A),\sigma(B))<\epsilon$. By definition of the Zolotarev numbers, we may choose also $p,q$ polynomials of degree at most $k$ that do not vanish on $\sigma(A)$ and $\sigma(B)$ respectively, and such that the rational function $r=p/q$ satisfies
\[
\frac{\sup_{z\in \sigma(A)}|r(z)|}{\inf_{z\in \sigma(B)}|r(z)|}<\epsilon.
\]
Now, $r(A)$ and $r(B)$ are both invertible by the spectral mapping theorem, and the spectral theorem yields
\[
\|r(A)\| \|r(B)^{-1}\|=\frac{\sup_{z\in \sigma(A)}|r(z)|}{\inf_{z\in \sigma(B)}|r(z)|}<\epsilon.
\]
Observe next that the operator
\[
K=r(A)Xr(B)^{-1}-X=q(A)^{-1}(p(A)Xq(B)-q(A)Xp(B))p(B)^{-1}
\]
is compact by Lemma \ref{L:Sylalg}. Since $\|X+K\|<\epsilon$, we conclude that $X$ can be approximated in norm by compact operators, and hence it is itself compact.
\end{proof}

\section{Lifting Sylvester equations}\label{dilatesylvester}

This section contains our main dilation results.  Throughout, we have two Hilbert spaces $\H$ and $\H'$, along with operators $A\in \B(\H'),B\in \B(\H)$ and $C\in \B(\H,\H')$. Assume that we are given a solution $X\in B(\H,\H')$ to the Sylvester equation $AX-XB=C$. Next, let $U\in B(\K')$ be a unitary dilation of $A$. We wish to show that the original solution $X$ admits a dilation that solves a ``lifted" Sylvester equation, where $A$  has been replaced by $U$. As mentioned previously, when $C=0$ this is exactly what the so-called  intertwiner lifting theorem accomplishes \cite[Corollary 5.9]{paulsen2002completely}. A different approach is required to handle the general case.

In the following results, we use the specific unitary dilations of $A$ introduced in Section \ref{prelim}. 

%
%

\begin{theorem}\label{T:mainfinite}
Let $\H$ and $\H'$ be Hilbert spaces. Let $A\in \B(\H')$ have norm equal to $1$, and let $B\in \B(\H)$ be such that $\sigma(B)\subset \{z\in \IC:|z|>1\}$. Assume that $C,X\in B(\H,\H')$ satisfy $AX-XB=C$. Then, for every $\epsilon>0$ there is an integer $n\geq 1$ and two operators $Y,Z\in \B(\H,\H'^{(n)})$ such that $P_{J\H'}Y=X$, $\|Z-Y\|<\epsilon$ and $U_{n,A}Z-ZB=J_n C$.
\end{theorem}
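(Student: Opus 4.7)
The plan is to use Lemma \ref{surjective} to obtain $Z$ and then build a companion $Y$ satisfying $P_{J\H'}Y = X$ whose image under the lifted Sylvester operator is close to $J_n C$. Since $U_{n,A}$ is unitary and $\sigma(B) \subset \{|z| > 1\}$, Lemma \ref{surjective} guarantees that $\IS_{U_{n,A},B}$ is invertible, with bound $\|W\| \le \gamma \|\IS_{U_{n,A},B}(W)\|$ for the constant $\gamma = \sum_{m=0}^\infty \|B^{-m}\|$ (which does not depend on $n$). In particular, there is a unique $Z \in \B(\H, \H'^{(n)})$ with $U_{n,A}Z - ZB = J_n C$. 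The entire task is then to construct $Y$ with $P_{J\H'}Y = X$ and $\|\IS_{U_{n,A},B}(Y) - J_n C\|$ small, since then $\|Z - Y\| \le \gamma \|\IS_{U_{n,A},B}(Y) - J_n C\|$.

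I will search for $Y = (Y_1, \ldots, Y_n)^T$ with $Y_1 = X$ (which secures $P_{J\H'}Y = X$) by reading off what the rows of the lifted Sylvester equation demand. Rows 3 through $n$ of $U_{n,A}Y - YB = J_n C$ require $Y_{k-1} = Y_k B$, equivalently $Y_k = Y_{k-1}B^{-1}$; row 2 requires $DY_1 - A^*Y_n = Y_2 B$; row 1 requires $AY_1 + D_*Y_n - Y_1 B = C$. The naive ansatz $Y = J_n X$ leaves an error of norm $\|DX\|$ in row 2 that does not decrease with $n$, so this must be refined. Instead I will set
\[
Y_k = DX B^{-(k-1)}, \qquad 2 \le k \le n.
\]
This choice makes rows 3 through $n$ telescope to zero, and row 1 reduces to the identity $AX - XB = C$ up to the small ``wrap-around'' term $D_* DX B^{-(n-1)}$; row 2 reduces to zero up to the wrap-around $-A^* DX B^{-(n-1)}$.

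A direct computation then yields $\IS_{U_{n,A},B}(Y) = J_n C + E_n$, where
\[
E_n = \bigl(D_*DX B^{-(n-1)},\; -A^*DX B^{-(n-1)},\; 0, \ldots, 0\bigr)^T,
\]
so $\|E_n\| \le (\|D_*D\| + \|A^*D\|)\|X\|\,\|B^{-(n-1)}\|$. Because $\sigma(B) \subset \{|z| > 1\}$, Lemma \ref{L:spec} gives $\|B^{-(n-1)}\| \to 0$ as $n \to \infty$, so for $n$ large enough $\|E_n\| < \epsilon/\gamma$, and then $\|Z - Y\| \le \gamma \|E_n\| < \epsilon$.

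The single non-routine step is settling on the ansatz for $Y$; once the defect operators $D, D_*$ are ``absorbed'' by the geometric-type series $DXB^{-(k-1)}$, the verification of the row-by-row equations is mechanical. I do not foresee any further obstacle: invertibility of $\IS_{U_{n,A},B}$ comes for free from Lemma \ref{surjective} because $U_{n,A}$ is unitary, and the decay of $\|B^{-n}\|$ comes for free from Lemma \ref{L:spec}.
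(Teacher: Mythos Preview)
Your proof is correct and the overall architecture matches the paper's: define $Y$ componentwise with $Y_1=X$, obtain $Z$ from Lemma~\ref{surjective} applied to $\IS_{U_{n,A},B}$, and bound $\|Y-Z\|$ via the Sylvester estimate applied to the residual $U_{n,A}Y-YB-J_nC$.

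Where you diverge is in the choice of $Y_2,\ldots,Y_n$. The paper does not use the explicit geometric ansatz $Y_k=DXB^{-(k-1)}$; instead it first solves an auxiliary equation $A^*Y_n+Y_nB^{n-1}=DX$ for $Y_n$ (again via Lemma~\ref{surjective}, now with $B^{n-1}$ in place of $B$) and then sets $Y_k=Y_nB^{n-k}$ for $2\le k\le n-1$. With that choice rows $2$ through $n$ of $U_{n,A}Y-YB$ vanish exactly and the entire residual is concentrated in row~$1$ as $J_n D_*Y_n$; the smallness for large $n$ comes from the decay of the Sylvester constant $\sum_{m\ge 1}\|B^{-(n-1)m}\|$ associated with $B^{n-1}$. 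Your route leaves a residual in rows $1$ and $2$, but both components carry the factor $B^{-(n-1)}$ whose norm tends to zero directly by Lemma~\ref{L:spec}. In effect you have truncated the infinite construction of Theorem~\ref{T:maininfinite} (the same formula $Z_n=DXB^{-(n-1)}$ appears there) rather than introducing a new auxiliary Sylvester problem. This is slightly more elementary, gives an explicit $Y$, and makes the parallel with the infinite case transparent; the paper's version buys a cleaner single-row residual at the cost of invoking Lemma~\ref{surjective} an extra time.
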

\begin{proof}
Throughout the proof, we let $D=\sqrt{I-A^*A}$ and $D_*=\sqrt{I-AA^*}$.
By Lemma \ref{surjective}, the series $\sum_{n=0}^\infty \|B^{-n-1}\|$ converges to some $\gamma>0$. We may thus choose an integer $n\geq 1$ large enough so  that
$
\gamma^2 \|D X\| <\epsilon.
$
Lemma \ref{surjective} also implies that we may find $Y_n\in B(\H,\H')$ satisfying 
\begin{equation}\label{Eq:surjn}
A^*Y_n+Y_nB^{n-1}=DX
\end{equation}
and
\begin{equation}\label{Eq:normYn}
\|Y_n\|\leq \gamma \|D X\|.
\end{equation}
 For each $2\leq k\leq n-1$, define 
 \begin{equation}\label{Eq:Yk}
 Y_k=Y_nB^{n-k}.
 \end{equation}
 Put $Y_1=X$. We may now define $Y:\H\to \H'^{(n)}$ as
\[
Yh=(Y_1 h,Y_2 h,\ldots, Y_n h).
\]
Clearly, we have $P_{J\H'}Y=X$.
Using \eqref{Eq:UnA} along with \eqref{Eq:surjn} and \eqref{Eq:Yk}, we find
\begin{equation}
U_{n,A}Y-YB= \begin{pmatrix}
AX-XB+D_{*}Y_n\\
DX-A^*Y_n-Y_2B\\
Y_2-Y_3B\\
\vdots\\
Y_{n-1}-Y_nB\end{pmatrix}= \begin{pmatrix}
C+D_{*}Y_n\\
0\\
0\\
\vdots\\
0\end{pmatrix}=J_n(C+D_{*}Y_n).
\label{eq:lemmaeq}\end{equation}
Next, invoke Lemma \ref{surjective} once again to find $Z\in B(\H,\H'^{(n)})$ satisfying $U_{n,A}Z-ZB=J_n C.$ It only remains to estimate the size of $\|Y-Z\|$. For this purpose, note that 
\[
U_{n,A}(Y-Z)-(Y-Z)B=J_n D_{*}Y_n.
\]
Applying Corollary \ref{surjective} one more time and invoking \eqref{Eq:normYn}, we find
\begin{align*}
\|Y-Z\|& \leq \gamma  \|J_n D_{*} Y_n\| \leq \gamma^2 \|DX\|< \epsilon
\end{align*}
where the last inequality follows from our choice of $n$.
\end{proof}

In light of the previous result, it is natural to wonder whether the approximation therein can be exact provided we replace $U_{n,A}$ by its counterpart $U_A$ (see \eqref{Eq:UA}). In other words, if we are willing to enlarge $\H'$ to a potentially infinite dimensional Hilbert space, can $\epsilon$ be taken to be $0$?

The next result shows that this is indeed possible.

\begin{theorem}\label{T:maininfinite}
Let $\H$ and $\H'$ be Hilbert spaces. Let $A\in \B(\H')$ have norm equal to $1$, and let $B\in \B(\H)$ such that $\sigma(B)\subset \{z\in \IC:|z|>1\}$. Assume that $C,X\in B(\H,\H')$ satisfy $AX-XB=C$. Then, there is an operator $Y\in \B(\H,\ell^2(\H')\oplus \ell^2(\H'))$ with the following properties.
\begin{enumerate}[{\rm (a)}]
\item $P_{J\H'}Y=X$
\item $U_{A}Y-YB=J C$
\item $\|Y-JX\|<\|X\| \sum_{n=1}^\infty \|B^{-n}\|$
\item $Y$ may be chosen to be compact provided that $X$ is compact.
\end{enumerate}
\end{theorem}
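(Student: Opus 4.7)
The guiding idea is to mimic the construction from Theorem \ref{T:mainfinite}, but now exploiting the infinite ``tail'' afforded by $\ell^2(\H')$ to pass from an approximate to an exact lifting. Using the block form \eqref{Eq:UA} of $U_A$, I would look for $Y$ of the shape $W\oplus 0\in \ell^2(\H')\oplus \ell^2(\H')$. Because the lower-right block $V_A^*$ annihilates the $0$ component, the target identity $U_A Y-YB=JC$ collapses to the single equation $V_AW-WB=\widetilde{J}C$, where $\widetilde{J}\colon\H'\to\ell^2(\H')$ is the first-coordinate embedding. Examining the shift-like action of $V_A$ in \eqref{Eq:VA} and writing $W h=(Y_1h,Y_2h,\ldots)$, this reduces to the recursion $AY_1-Y_1B=C$ and $Y_{k+1}B=DY_k$ for $k\geq 1$, which (since $B$ is invertible) is solved explicitly by
\[
W h \;=\; \bigl(X h,\; DXB^{-1}h,\; DXB^{-2}h,\; DXB^{-3}h,\;\ldots\bigr), \qquad D=(I-A^*A)^{1/2}.
\]

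Having proposed this formula, the first task is to confirm that $W$ defines a bounded operator. Lemma \ref{L:spec} gives $\sum_{n\geq 1}\|B^{-n}\|<\infty$, and hence also $\sum_{n\geq 1}\|B^{-n}\|^2<\infty$, so the estimate
\[
\|Wh\|^2 \leq \|X\|^2\|h\|^2 + \|DX\|^2\|h\|^2\sum_{n\geq 1}\|B^{-n}\|^2
\]
suffices. Property (a) is then immediate, and for (b) I would simply expand $V_AW h$ and $W Bh$ coordinatewise: the first slot yields $AXh-XBh=Ch$ while every subsequent slot cancels by the defining recursion, producing $JC$ exactly.

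For (c), the norm difference collapses to
\[
\|(Y-JX)h\|^2 \;=\; \sum_{n\geq 1}\|DXB^{-n}h\|^2 \;\leq\; \|X\|^2\|h\|^2 \sum_{n\geq 1}\|B^{-n}\|^2,
\]
so $\|Y-JX\|\leq \|X\|\bigl(\sum_{n\geq 1}\|B^{-n}\|^2\bigr)^{1/2}$, and the elementary inequality $\bigl(\sum a_n^2\bigr)^{1/2}<\sum a_n$, strict as soon as at least two $a_n$ are positive (which is guaranteed since $B$ is invertible, forcing $\|B^{-n}\|>0$ for every $n$), yields the bound as stated. For (d), the truncations $W^{(N)}h=(Xh,DXB^{-1}h,\ldots,DXB^{-N}h,0,0,\ldots)$ are compact whenever $X$ is, since every nonzero block factors through $X$, and they converge to $W$ in operator norm because $\sum_{n>N}\|B^{-n}\|^2\to 0$; hence $Y=W\oplus 0$ inherits compactness.

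The main obstacle is really just the initial algebraic one: recognizing that the infinite Hilbert space allows us to replace the ``corrective'' term $D_*Y_n$ appearing at the end of \eqref{eq:lemmaeq} in the finite case by a properly terminating telescoping tail. Once the closed-form expression for $W$ is in hand, every item in the statement reduces to a short verification; the only point calling for a moment's attention is the strict inequality in (c), which one secures by sacrificing a tiny amount in passing from the $\ell^2$-norm of $(\|B^{-n}\|)_n$ to its $\ell^1$-norm.
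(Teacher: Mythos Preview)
Your construction is exactly the paper's: define $Y=W\oplus 0$ with $Wh=(Xh,\,DXB^{-1}h,\,DXB^{-2}h,\ldots)$, verify (b) coordinatewise via $V_A$, and obtain (d) by norm-approximating with finite truncations. One small slip: the recursion you write, $Y_{k+1}B=DY_k$ for all $k\geq 1$, is only correct at $k=1$; for $k\geq 2$ the correct relation is $Y_{k+1}B=Y_k$ (no $D$), as dictated by the shift structure of $V_A$---but your closed-form formula for $W$ is the right one, so the verification goes through unchanged. For (c) you in fact do a little better than the paper, which only records the non-strict bound $\|Y-JX\|\leq\|X\|\sum_{n}\|B^{-n}\|$; your $\ell^2$-versus-$\ell^1$ step legitimately secures the strict inequality in the statement (except in the degenerate case $X=0$, where the statement itself fails).
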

\begin{proof}
Throughout the proof, we let $D=\sqrt{I-A^*A}$ and $D_*=\sqrt{I-AA^*}$.
Put $Z_1=X$ and $Z_2=D X B^{-1}$. For each $n\geq 3$, we recursively define $Z_n=Z_{n-1}B^{-1}$. In other words, $Z_n=D XB^{-(n-1)}$.
For each $m\geq 1$, define $W_m:\H\to \ell^2(\H')$ as
\[
W_m h=(Z_1 h, \ldots, Z_m h,0\ldots ).
\]
Given $n>m\geq 2$, we compute
\[
\|W_n-W_m\|\leq \sum_{j=m+1}^n \|Z_j\| \leq \|X\|  \sum_{j=m+1}^n \|B^{-(j-1)}\|.
\]
By Lemma \ref{surjective}, we infer that the sequence $(W_m)$ is Cauchy, and hence it converges in norm to the operator $Z:\H\to \ell^2(\H')$ such that
\[
Zh=(Z_1 h,Z_2 h,\ldots).
\]
Using \eqref{Eq:VA}, we may now compute
\begin{align*}
V_A Z-ZB&=\begin{pmatrix}AZ_1-Z_1B\\ DZ_1-Z_2 B\\Z_2-Z_3 B\\\vdots \\ Z_n-Z_{n+1}B\\ \vdots \end{pmatrix}=\begin{pmatrix}C\\ 0\\ \vdots  \end{pmatrix}.
\end{align*}
Hence, if we define $Y:\ell^2(\H)\to \ell^2(\H')\oplus \ell^2(\H')$ as
\[
Y v=Zv \oplus 0, \quad v\in \ell^2(\H)
\]
then \eqref{Eq:UA} implies that
\[
U_A Y-ZB=JC.
\]
It is easy to verify that $P_{J\H'}Y=X$.  Furthermore,
\[
\|Y-JX\|\leq \sum_{n=2}^\infty\|Z_n\|\leq \|X\| \sum_{n=1}^\infty \|B^{-n}\|
\]
Finally, when $X$ is compact, then every $W_m$ is compact, so that the limit $Z$ is also compact. This implies that $Y$ is compact as well.
\end{proof}

Under appropriate additional conditions, the arguments used in the proofs of Theorems \ref{T:mainfinite} and \ref{T:maininfinite} can be adapted to also replace the other coefficient $B$ by its unitary dilation. This appears to have limited use for the purpose of establishing the decay of singular values of $X$: we know of no obvious way to guarantee that the unitary dilations  have well separated spectra.  Therefore, we do not pursue this here, and rather refer the interested reader to \cite{klippenstein2022singular} for details and additional dilation results of similar type.

\subsection{Decay of singular values}

We close this section with an application of our operator theoretic results.

\begin{corollary}\label{fastdecay}
Let $\H$ and $\H'$ be Hilbert spaces. Let $A\in \B(\H')$ have norm equal to $1$, and let $B\in \B(\H)$ be self-adjoint with spectrum contained in the interval $[a,b]$ for some $a>1$. Assume that $C,X\in B(\H,\H')$ satisfy $AX-XB=C$, and that $C$ has finite rank equal to $v$. Then, for each positive integer $k$ and $\ell$, we have that
$$\frac{s_{\ell+2vk}(X) }{s_\ell(X)}\leq 4\left[1+\frac{\norm{X}}{s_\ell(X)(a-1)} \right]\left[\exp\left(\frac{\pi^2}{2\ln (16\gamma)}\right)\right]^{-2k}$$
for
$$ \gamma = \left[\frac{(a+1)(1-b)}{(1-a)(b+1)}\right]^2.$$
\end{corollary}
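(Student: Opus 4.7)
The plan is to lift the Sylvester equation $AX-XB=C$ to one in which $A$ is replaced by a unitary operator; once both coefficients in the lifted equation are normal, Theorem~\ref{T:BT} gives rapid decay of the singular values of the lifted solution, and Lemma~\ref{L:singvaldil} transfers the decay back to $X$.

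I would begin by observing that $X$ is compact. Since $\|A\|\leq 1$ and $\sigma(B)\subset\{|z|>1\}$, the proof of Lemma~\ref{surjective} expresses $X=\sum_{n=0}^\infty A^n C B^{-n-1}$, a norm-convergent sum of finite-rank operators. Next, I would apply Theorem~\ref{T:maininfinite} to obtain a compact operator $Y\in\B(\H,\ell^2(\H')\oplus\ell^2(\H'))$ with $P_{J\H'}Y=X$, $U_AY-YB=JC$, and $\|Y-JX\|\leq\|X\|\sum_{n\geq 1}\|B^{-n}\|$.

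The main subtlety --- and what I expect to be the least obvious step --- is to recognize that the operator $U_A$ defined in \eqref{Eq:UA} is in fact \emph{unitary}, not merely isometric as asserted in Section~\ref{prelim}. A direct block-matrix computation using $V_A^*V_A=I$ (so that $V_AV_A^*$ is a projection and $(I-V_AV_A^*)^2=I-V_AV_A^*$) verifies $U_AU_A^*=I$, and one similarly checks $U_A^*U_A=I$. Given that $U_A$ is unitary (hence normal), $B$ is self-adjoint, $Y$ is compact, and $JC$ has rank $v$, Theorem~\ref{T:BT} applies to the lifted equation and yields
\[
s_{\ell+2vk}(Y)\leq Z_{2k}(\sigma(U_A),\sigma(B))\,s_\ell(Y).
\]
Monotonicity of Zolotarev numbers with respect to set inclusion, together with $\sigma(U_A)\subset\{|z|=1\}$, $\sigma(B)\subset[a,b]$, and Corollary~\ref{lineandcircleboundcorollary}, then bounds the right-hand side by $4[\exp(\pi^2/(2\ln(16\gamma)))]^{-2k}\,s_\ell(Y)$.

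Finally, I would invoke Lemma~\ref{L:singvaldil} with the choice $Z:=Y$, so that the $\|Y-Z\|$ term vanishes and one obtains
\[
\frac{s_{\ell+2vk}(X)}{s_\ell(X)}\leq\left(1+\frac{\|Y-JX\|}{s_\ell(X)}\right)\frac{s_{\ell+2vk}(Y)}{s_\ell(Y)}.
\]
It only remains to control $\|Y-JX\|$: since $B$ is self-adjoint with $\sigma(B)\subset[a,b]$, each $B^{-n}$ is self-adjoint with norm equal to its spectral radius $a^{-n}$, so $\sum_{n\geq 1}\|B^{-n}\|=1/(a-1)$ and hence $\|Y-JX\|\leq\|X\|/(a-1)$. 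Substituting this bound together with the Zolotarev estimate into the previous display produces the claimed inequality.
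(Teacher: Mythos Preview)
Your proof is correct and follows essentially the same route as the paper: lift via Theorem~\ref{T:maininfinite}, apply Theorem~\ref{T:BT} to the unitary--self-adjoint pair, bound the Zolotarev numbers with Corollary~\ref{lineandcircleboundcorollary}, and transfer back with Lemma~\ref{L:singvaldil} (taking $Z=Y$). The only differences are cosmetic: you establish compactness of $X$ directly from the series $X=\sum A^nCB^{-n-1}$ and then invoke Theorem~\ref{T:maininfinite}(d), whereas the paper first lifts and then deduces compactness of $Y$ from Lemma~\ref{L:sylcomp}; and you explicitly verify that $U_A$ is unitary (the paper only calls it an ``isometric dilation'' in Section~\ref{prelim} but tacitly uses unitarity here), which is a worthwhile clarification.
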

\begin{proof}
By Theorem \ref{T:maininfinite}, there is a Hilbert space $\K$ containing $\H'$, an operator $Y\in \B(\H,\K)$ satisfying $P_{\H'}Y=X$, a rank $v$ operator $D\in \B(\H,\K)$ and a unitary $U\in \B(\K)$  such that
$$UY-YB=D$$
and
\[
\|Y-JX\|\leq \|X\| \sum_{n=1}^\infty \|B^{-n}\|=\|X\| \sum_{n=1}^\infty a^{-n}=\frac{\|X\| }{a-1}.
\] 
Since the spectrum of $U$ is contained in the unit circle $\IT$, using Theorem \ref{T:BT} for each positive integer $k$ and $\ell$, we obtain
$$s_{\ell+2vk}(Y)\leq Z_{2k}\left(\IT, [a,b]\right)s_{\ell}(Y).$$
Next, Corollary \ref{lineandcircleboundcorollary} gives us an upper bound for the Zolotatev numbers. In particular, from Lemma \ref{L:sylcomp} we infer that $Y$ must be compact, and thus so is $X=P_{\H'}Y$.
It thus only remains to apply Lemma \ref{L:singvaldil} with $Y=Z$ to obtain the desired estimate.
\end{proof}

\subsection{An example}
We saw in the introduction that, generally speaking, no information about the behaviour of the singular values of $X$ can be extracted from the fact that $AX-XB$ has small rank. On the other hand, a decaying condition is obtained in Theorem \ref{T:BT}, provided that $A$ and $B$ are normal with well-separated spectra. The normality condition alone is not sufficient: take for instance $A=C$ to be a rank one projection, $B=0$ and $X=I$. The aim of this subsection is to show that the spectral condition alone is not sufficient.

Fix $n\geq 2$. For $0<b<1$, we define $T_b\in \IC^{n\times n}$ 
to be the usual upper-triangular Jordan block with eigenvalue $b$.  Put $B_b=T_b^{-1}$. 

\begin{example}\label{counterexample}
Let $X=I$, the identity matrix of size $n\geq 2$. Standard estimates can be used to show that there is $0<b<1$ small enough so that 
\[
\sum_{k=1}^{\infty} s_2(B_b)^k\norm{T_b^k} \leq n,
\]
see \cite[Example 5.18]{klippenstein2022singular} for details. 
In particular, it follows from Lemma \ref{L:spec} that the spectrum of $B_b/s_2(B_b)$ is contained in $\{z\in \IC:|z|>1\}$.
Let $R$ be a rank $1$ matrix such that $\|B_b-R\|=s_2(B_b)$. Taking $A=\frac{1}{s_2(B_b)}(B_b-R)$ and $C=-\frac{1}{s_2(B_b)}R$, we thus have
\[
AX-X\left( \frac{1}{s_2(B_b)}B_b\right)=C.
\]
Invoking Theorem \ref{T:maininfinite}, there is a Hilbert space $\K$, an isometric embedding $J:\IC^n\to \K$, an operator $Y\in \B(\IC^n,\K)$ satisfying $P_{J\IC^n}Y=X$, a rank $1$ operator $D\in \B(\IC^n,\K)$ and a unitary $U\in \B(\K)$  such that
$$UY-Y\left( \frac{1}{s_2(B_b)}B_b\right)=D$$
and
\[
\|Y-JX\|<\sum_{k=1}^\infty s_2(B_b)^k \|T_b^{-k}\|.
\]
In view of our choice of $b$, we infer that 
$
\|Y-JX\|<n.
$
We see that $Y$ satisfies a Sylvester equation whose right-hand side has rank $1$, and for which the spectra of the coefficients are far from each other. Nevertheless, the singular values of $Y$ decay rather slowly.  Indeed, Lemma \ref{L:singvaldil} implies that
\[
\frac{s_k(Y)}{s_1(Y)}\geq \frac{1}{1+n}
\]
for each $k\geq 1$.
\end{example}

%
%
%
%

\section{Solving Sylvester equations}\label{applications}

There are many known methods for solving Sylvester equations involving  finite dimensional operators. The most well-known algorithm is the Bartels--Stewart method, which consists of taking the Schur decomposition of both coefficients $A$ and $B$ \cite{bartels1972solution}. When $A$ and $B$ have disjoint spectra, the solution is necessarily unique, and  this method always produces an exact solution. However, there is one disadvantage.  Even if $A$ and $B$ are sparse, the unitary factors in their Schur decomposition will almost always be dense. This in turn causes the computational complexity to be $\mathcal{O}(m^3+n^3)$, where $m$ and $n$ are the sizes of $A$ and $B$ respectively.

\subsection{Alternating Direction Implicit Method}\label{adi}

One method which avoids the problems with the Bartels--Stewart method is an iterative algorithm known as the alternating direction implicit (ADI) method, which was first introduced in 1955 in \cite{peaceman1955numerical}. For $k$ iterations, the first step is to choose complex shifts $\{\alpha_i\}_{i=1}^k$ and $\{\beta_j\}_{j=1}^k$ such that $A-\beta_j$ and $B-\alpha_j$ are invertible for each $j$.  Next, for an initial guess $X_0$, perform the following steps.
\begin{enumerate}
\item Solve for $X_{j-\frac{1}{2}}$ in
$$(A-\beta_{j})X_{j-\frac{1}{2}} = X_{j-1}(B-\beta_{j})+C.$$
\item Solve for $X_{j}$ in
$$X_{j}(B-\alpha_{j})=(A-\alpha_{j})X_{j-\frac{1}{2}}-C$$
\end{enumerate}

Next, we need to determine how many iterations we need if we want $\norm{X-X_k}$ to be small enough; see \cite{benner2014computing} for several properties of this method after $k$ iterations. The property we are most interested in here is
$$X-X_k = \prod_{j=1}^k\frac{A-\alpha_j}{A-\beta_j}(X-X_0)\prod_{j=1}^k\frac{B-\beta_j}{B-\alpha_j}$$
where our abuse of notation is that division by a matrix means multiplication by its inverse.  Therefore,
$$\norm{X-X_k} \leq \norm{r_k(A)}\norm{r_k(B)^{-1}}\norm{X-X_0} , \qquad r_k(z) = \prod_{j=1}^k \frac{z-\alpha_j}{z-\beta_j}. $$

It immediately follows that if $A$ and $B$ are both normal, and we make an optimal choice of shifts,
\begin{equation}\norm{X-X_k} \leq Z_k\left(\sigma(A),\sigma(B)\right)\norm{X-X_0}.\label{eq:adierror}\end{equation}

For time complexity, clearly it is $\O\left(N\left(m^3+n^3\right)\right)$ in general where $N$ is the total number of iterations. However, if both $A$ and $B$ support fast shifted linear solves, then we can use their structure to solve each iteration in $\O\left(m^2+n^2\right)$.  For example,  if $A$ and $B$ are tridiagonal matrices, we can use Thomas' algorithm \cite{ford2014numerical}.  This implies that the total time is $\O\left(N\left(m^2+n^2\right)\right)$, and thus beats Bartels-Stewart provided not too many iterations are required. Furthermore, if the number of iterations is independent of $m$ and $n$, we can say the time complexity is $\O\left(m^2+n^2\right)$.

On the other hand, a direct computation shows that
\begin{equation}X_{k} = \frac{A-\alpha_k}{A-\beta_k}X_{k-1}\frac{B-\beta_k}{B-\alpha_k} + (\beta_k-\alpha_k)(A-\beta_k)^{-1}C(B-\alpha_k)^{-1}\label{eq:alternateadi}\end{equation}
which implies that if $\rank(C)=v$,  then $\rank(X_k) \leq \rank(X_{k-1}) +v$,  and thus
$$\rank(X_k) \leq kv + \rank(X_0).$$
By combining equations \eqref{eq:adierror} with \eqref{eq:alternateadi}, we get the following interesting consequence.

\subsection{Factored Alternating Direction Implicit Method}

As the name suggests, the factored ADI (fADI) method is similar to the ADI method.  However,  the right-hand side must be factored, and in turn,  the approximate solution will also be factored.  More precisely, if the right-hand side is factored as $C=FG^*$ for $F\in\IC^{n\times r}$ and $G\in\IC^{m\times r}$, then $k$ iterations of the fADI results in $Y_k\in\IC^{n\times kr}$ and $Z_k\in\IC^{m\times kr}$ where $Y_kZ_k^*$ approximately solves $AX-XB=FG^*$.  One must be cautious when using fADI as there are different versions, with different error analyses, in the literature; see for instance \cite{benner2014computing,  benner2009adi}.  The version we use here can be found in \cite{benner2005}.

The first step of fADI consists, once again, of choosing shifts $\{\alpha_i\}_{i=1}^k$ and $\{\beta_j\}_{j=1}^k$ so that $A-\beta_j$ and $B-\alpha_j$ are both invertible for each $j$. Then, with initial guesses $Z_0, Y_0$, perform the following:

\begin{enumerate}
\item Solve for $Y_{j}$ in
$$(A-\beta_{j})Y_{j}=\begin{pmatrix} F & (A-\alpha_{j})Y_{j-1}\end{pmatrix}.$$
\item Solve for $Z_{j}$ in
$$(B-\alpha_j)^*Z_j = \begin{pmatrix} (\beta_j-\alpha_j)^*G & (B-\beta_j)^*Z_{j-1}\end{pmatrix}.$$
\end{enumerate}
After each iteration, define $X_{j} = Y_{j}Z_{j}^*$.

At first glance, the relation between ADI and fADI is not clear. However, one can show that \eqs{alternateadi} can also be derived from the fADI method. This implies the error analysis of fADI is identical to that of ADI.

In general, the time complexity of the fADI method is identical to that of the ADI method. However, if both $A$ and $B$ support fast shifted linear solves, then $N$ iterations require $\mathcal{O}(Nr(m+n))$ time.  Furthermore, if the number of iterations is independent of $m$ and $n$, and $N\ll m,n$, then we can solve each iteration, and thus fADI all together, in $\mathcal{O}(m+n)$ time.

\subsection{Application: convergence of the methods in the non-normal case}

We aim to show that the ADI and fADI method can converge quickly without requiring both coefficients to be normal. As mentioned above, the time complexity of each iteration of the ADI and fADI method is dependent on the dimensions of $A$ and $B$.  Hence,  if we use Theorems \ref{T:mainfinite} and \ref{T:maininfinite}, we must avoid the need to run ADI and fADI on the resulting lifted Sylvester equations, as each iteration could take a very long time or even infinite time. We show here how we can avoid this problem. We start with a technical observation.
%

\begin{lemma}\label{adidilation} 
Let $\H$ and $\H'$ be Hilbert spaces. Let $A\in \B(\H'),B\in \B(\H)$ and $X,C\in \B(\H,\H')$ satisfy $AX-XB=C$. Assume that there is a Hilbert space $\K$ containing $\H'$, along operators $U\in \B(\K),Y\in \B(\H,\K), D\in \B(\H,\K)$ such that $UY-YB=D$ and
\[
P_{\H'}Y=X, \quad P_{\H'}D=C, \quad P_{\H'}U|_{\H'}=A.
\]
Run ADI on both $AX-XB=C$ and $UY-YB=D$ with shifts $\{\alpha_i\}_{i=1}^k$ and $\{\beta_j\}_{j=1}^k$ such that $\{\alpha_i\}_{i=1}^k\cap\sigma(B)=\{\beta_j\}_{j=1}^k\cap\sigma(A)=\{\beta_j\}_{j=1}^k\cap\sigma(U)=\emptyset$. Assume $U$ and $Y_0$ are such that
\begin{equation}P(U-\gamma)Y_j = (A-\gamma)PY_j\label{eq:adirequirement}\end{equation}
for each $\gamma= \alpha_j, \beta_j$ and $1\leq j\leq k$. If $X_0=P_{\H'}Y_0$, then $X_j=P_{\H'}Y_j$ for all $j\geq 0$.
\end{lemma}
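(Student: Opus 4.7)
The proof strategy is a straightforward induction on $j$, with the hypothesis \eqref{eq:adirequirement} doing all the work: it lets us ``project'' each ADI step at the $U$-level down to the corresponding ADI step at the $A$-level, matching it with the iteration the lemma compares against. The base case $j=0$ is exactly the assumption $X_0 = P_{\H'} Y_0$. For the inductive step, the plan is to run through the two sub-steps of the ADI iteration one after the other, applying $P = P_{\H'}$ to each.

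Assuming $P Y_{j-1} = X_{j-1}$, I would apply $P$ to the first ADI equation for the lifted problem, $(U-\beta_j) Y_{j-1/2} = Y_{j-1}(B-\beta_j) + D$, and use \eqref{eq:adirequirement} together with $PD = C$ to rewrite the left-hand side as $(A-\beta_j) P Y_{j-1/2}$ and the right-hand side as $X_{j-1}(B-\beta_j) + C$. Comparing with the first ADI sub-step for $AX-XB=C$ and using that $A-\beta_j$ is invertible (since $\beta_j \notin \sigma(A)$), I conclude $P Y_{j-1/2} = X_{j-1/2}$. Repeating the process for the second sub-step $Y_j (B-\alpha_j) = (U-\alpha_j) Y_{j-1/2} - D$, applying $P$, using \eqref{eq:adirequirement} and the just-established $P Y_{j-1/2} = X_{j-1/2}$, yields $(P Y_j)(B-\alpha_j) = (A-\alpha_j) X_{j-1/2} - C = X_j (B-\alpha_j)$, and invertibility of $B-\alpha_j$ (since $\alpha_j \notin \sigma(B)$) gives $P Y_j = X_j$. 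This closes the induction.

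The one genuine point to verify is that \eqref{eq:adirequirement} is applicable to the half-step iterates $Y_{j-1/2}$ and not just the integer iterates; my reading is that the hypothesis is meant to govern every iterate produced by ADI on $UY - YB = D$, in which case there is nothing to check. If instead the statement is taken more literally as applying only to integer indices, one would use the explicit formula $Y_{j-1/2} = (U-\beta_j)^{-1}[Y_{j-1}(B-\beta_j) + D]$ to rewrite $(U-\alpha_j) Y_{j-1/2}$ in terms of $(U-\alpha_j)(U-\beta_j)^{-1}$ acting on a vector whose projection we already understand, and then invoke the hypothesis only on $Y_{j-1}$ together with the compatibility $P U|_{\H'} = A$. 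Either interpretation leads to the same conclusion, and the computation is purely algebraic; no real obstacle arises beyond keeping track of which operators are invertible and which identities may legitimately be applied at which step.
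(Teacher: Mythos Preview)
Your proof is correct and matches the paper's argument essentially line for line: induction on the ADI iterates, applying $P_{\H'}$ to each sub-step and using \eqref{eq:adirequirement} together with invertibility of $A-\beta_j$ and $B-\alpha_j$. Regarding your caveat, the paper's own proof does invoke \eqref{eq:adirequirement} on the half-step iterates $Y_{j+1/2}$, so your first reading of the hypothesis is the intended one.
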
 
\begin{proof} 
Proceed with induction, noting that the base case holds by assumption. Next, assume that $X_j = P_{\H'}Y_j$. Beginning with applying ADI to $AX-XB=C$, observe:
\begin{equation}\begin{aligned}
(A-\beta_{j+1})X_{j+\frac{1}{2}} &= X_j(B-\beta_{j+1})+C\\
&= P_{\H'}Y_j(B-\beta_{j+1})+P_{\H'}D\\
&=P_{\H'}\left[Y_j(B-\beta_{j+1})+D\right]\\
&=P_{\H'}(U-\beta_{j+1})Y_{j+\frac{1}{2}}\\
&=(A-\beta_{j+1})P_{\H'}Y_{j+\frac{1}{2}}.
\end{aligned}\end{equation}
Since $\beta_{j+1}$ is chosen so that $A-\beta_{j+1}$ is invertible, $X_{j+\frac{1}{2}} =P_{\H'}Y_{j+\frac{1}{2}}$. Showing $X_{j+1} = P_{\H'}Y_{j+1}$ follows from a similar calculation:
\begin{equation}\begin{aligned}
X_{j+1}(B-\alpha_{j+1}) &= (A-\alpha_{j+1})X_{j+\frac{1}{2}} - C\\
&= (A-\alpha_{j+1})P_{\H'}Y_{j+\frac{1}{2}} - P_{\H'}D\\
&=P_{\H'}(U-\alpha_{j+1})Y_{j+\frac{1}{2}}-P_{\H'}D\\
&=P_{\H'}\left[(U-\alpha_{j+1})Y_{j+\frac{1}{2}}-D\right]\\
&=P_{\H'}PY_{j+1}(B-\alpha_{j+1}).
\end{aligned}\end{equation}
As $\alpha_{j+1}$ was chosen so that $B-\alpha_{j+1}$ is invertible, $X_{j+1} = PY_{j+1}$.
\end{proof}

We now arrive at our convergence result.

\begin{theorem}\label{adidilationexample}
Let $A\in \IC^{m\times m}, B\in \IC^{n\times n}$ and $C,X\in \IC^{m\times n}$. Assume that $\|A\|$ has norm equal to $1$, that $B$ is Hermitian with $\sigma(B))\subset[a,b]$ where $a>1$. Assume also that $AX-XB=C$.
 Run $k$ iterations of ADI with an initial guess $X_0=0$, along with shifts $\alpha_i=\frac{2}{a+b}$ and $\beta_j=\frac{a+b}{2}$. Then,
$$\norm{X-X_k}\leq \left(1+\frac{\sqrt{2}}{a-1}\right)\left(\frac{\frac{b}{a}-1}{a+b-\frac{2}{a}}\right)^k\norm X.$$
\end{theorem}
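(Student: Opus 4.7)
The plan is to lift the Sylvester equation $AX - XB = C$ to one whose first coefficient is unitary via Theorem \ref{T:maininfinite}, run ADI in the lifted space, and then transfer the classical normal--normal error analysis back down using Lemma \ref{adidilation}. First, I would invoke Theorem \ref{T:maininfinite} to obtain a Hilbert space $\K = \ell^2(\H') \oplus \ell^2(\H')$, an isometric embedding $J:\H'\to\K$, the unitary $U_A \in \B(\K)$ from \eqref{Eq:UA}, and an operator $Y\in\B(\H,\K)$ satisfying $U_A Y - YB = JC$, $P_{J\H'}Y = X$, and
\[
\|Y - JX\| \le \|X\| \sum_{n=1}^\infty \|B^{-n}\| \le \frac{\|X\|}{a-1},
\]
where the last inequality uses that $B$ is Hermitian with $\sigma(B) \subset [a,b]$ and $a > 1$.

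Next, I would run $k$ iterations of ADI on the lifted equation with the same shifts $\alpha_j = \tfrac{2}{a+b}$, $\beta_j = \tfrac{a+b}{2}$ and initial guess $Y_0 = 0$, so that Lemma \ref{adidilation} yields $X_j = P_{J\H'}Y_j$ for every $j$. Verifying the compatibility condition \eqref{eq:adirequirement} is the central technical point: I would prove inductively that every iterate $Y_j$ has image contained in the subspace $\ell^2(\H') \oplus \{0\}$. The base case is trivial and $JC$ has image in $J\H' \subset \ell^2(\H') \oplus \{0\}$; moreover, the block upper-triangular form of $U_A$ in \eqref{Eq:UA} combined with $\sigma(V_A) \subset \overline{\ID}$ and $\beta_j > 1$ shows that $(U_A - \beta_j)^{-1}$ preserves $\ell^2(\H') \oplus \{0\}$. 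Restricted to that subspace, the explicit form of $V_A$ in \eqref{Eq:VA} gives $P_{J\H'} U_A = A P_{J\H'}$, which is precisely what \eqref{eq:adirequirement} demands.

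With $X_j = P_{J\H'} Y_j$ in hand, the conclusion follows from the classical ADI error formula. Setting $w = \tfrac{a+b}{2}$ and $r_k(z) = \bigl((z - \tfrac{1}{w})/(z - w)\bigr)^k$, one has $Y - Y_k = r_k(U_A)\, Y\, r_k(B)^{-1}$. Since $U_A$ is unitary and $r_k$ is holomorphic on the closed unit disk, $\|r_k(U_A)\| \le \sup_{|z|=1}|r_k(z)| = w^{-k}$, and since $B$ is self-adjoint, $\|r_k(B)^{-1}\| = \max_{z\in[a,b]}|(z-w)/(z-\tfrac{1}{w})|^k$, attained at $z = a$; a short calculation yields $\|r_k(U_A)\|\,\|r_k(B)^{-1}\| \le q^k$ with $q = (b/a - 1)/(a + b - 2/a)$, matching the rate from \eqref{eq:bound2}. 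To bound $\|Y\|$, I would exploit that $(JX)h$ and $(Y - JX)h$ lie in orthogonal subspaces of $\K$ for every $h$ (the first slot of the first $\ell^2$ summand versus the remaining slots), so Pythagoras gives $\|Y\|^2 \le \|X\|^2 + \|Y - JX\|^2 \le \|X\|^2\bigl(1 + (a-1)^{-2}\bigr)$; the elementary inequality $\sqrt{1+x^2} \le 1 + \sqrt{2}\,x$ for $x \ge 0$ then yields $\|Y\| \le \|X\|(1 + \sqrt{2}/(a-1))$. Combining $\|X - X_k\| \le \|Y - Y_k\| \le q^k \|Y\|$ gives the claimed bound.

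The main obstacle is this compatibility check. The dilation $U_A$ is not upper triangular with respect to $J\H' \subset \K$ itself, only with respect to the larger subspace $\ell^2(\H') \oplus \{0\}$, so one must confirm that the ADI iterates never leave that larger subspace. This works precisely because the only shift ever inverted through $U_A$ is $\beta_j$, which lies outside $\overline{\ID} \supset \sigma(V_A)$.
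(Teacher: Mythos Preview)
Your proposal is correct and follows essentially the same route as the paper: lift via Theorem \ref{T:maininfinite}, verify the compatibility hypothesis of Lemma \ref{adidilation}, and apply the normal--normal ADI error bound \eqref{eq:adierror} together with \eqref{eq:bound2}. The paper simply asserts that \eqref{eq:adirequirement} holds ``by a straightforward yet tedious calculation''; your argument via the invariance of $\ell^2(\H')\oplus\{0\}$ under the ADI iteration and the identity $P_{J\H'}V_A = A P_{J\H'}$ on that subspace is exactly the content of that calculation. The only genuine difference is in bounding $\|Y\|$: the paper uses the triangle inequality $\|Y\|\le\|X\|+\|Y-JX\|$, which actually yields the sharper constant $1+\tfrac{1}{a-1}$, whereas you exploit the orthogonality built into the construction of $Y$ to get $\|Y\|^2\le\|X\|^2+\|Y-JX\|^2$ and then apply $\sqrt{1+x^2}\le 1+\sqrt{2}\,x$ to recover the stated constant. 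Your Pythagorean step is a nice observation, but note that $\sqrt{1+x^2}\le 1+x$ already holds for $x\ge 0$, so your route can also reach the tighter constant.
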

\begin{proof}
First,  observe that if $X$ is zero,  then the result is trivial,  and thus we assume that $X$ is nonzero.  Next, we may apply Theorem \ref{T:maininfinite} and get a lifted equation $UY-YB=D$ that satisfies
\[
\|Y-JX\|\leq \|X\|\sum_{n=1}^\infty \|B^{-n}\|=\|X\|\sum_{n=1}^\infty a^{-n}=\frac{\|X\|}{a-1}.
\]
Run ADI on $UY-YB=D$ with the same shifts and initial guess $Y_0=0$. A straightforward yet tedious calculation shows that \eqs{adirequirement} holds. By Lemma \ref{adidilation}, there is an orthogonal projection $P$ such that  $X_j = PY_j$ for $j\geq 1$. Furthermore, we have that
\[
\|Y\|\leq \|JX\|+\|Y-JX\|=\|X\|+\|Y-JX\|
\]
whence
\[
\frac{1}{\|X\|}\leq \frac{1}{\|Y\|} \left(1+\frac{\norm{Y-JX}}{\norm{X}}\right).
\]
Consequently, by equations \eqref{eq:bound2} and \eqref{eq:adierror} we find
$$\begin{aligned}
\frac{\norm{X-X_k}}{\norm{X}}&\leq \left(1+\frac{\norm{Y-JX}}{\norm{X}}\right)\frac{\norm{PY-PY_k}}{\norm{Y}} \\
&\leq \left(1+\frac{\norm{Y-X'}}{\norm{X}}\right)\frac{\norm{Y_k-Y}}{\norm{Y}} \\
&\leq \left(1+\frac{1}{a-1}\right)\left(\frac{\frac{b}{a}-1}{a+b-\frac{2}{a}}\right)^k.
\end{aligned}$$
\end{proof}

Recall from \eqs{alternateadi} that $X_k$ obtained from both ADI and fADI are the same, and Corollary \ref{adidilationexample} holds if we replace ADI with fADI.  Moreover, one can use Corollary \ref{lineandcircleboundcorollary} instead of \eqs{bound2} to bound the Zolotarev numbers.  In this case,  the shifts would be the optimal shifts over two intervals,  \cite{fortunato2020fast}, and could then be related back to the unit circle and interval according the transformations used in Theorem \ref{lineandcirclebound}.

\section{Numerical Experiments}\label{numericalresult}
In this section, we use Theorem \ref{adidilationexample} to numerically solve a partial integro-differential equation. One class of these equations involve what is known as a Volterra integral operator, see \cite{corduneanu2000volterra, volterra1959theory}. These equations arise in fields such as fluid mechanics, biophysics and naval architecture. Here, we solve a simplified partial integro-differential equation which results in a Sylvester equation with one nonnormal coefficient.

Consider
\begin{equation}\int_0^x a(t)u(t,y)dt+u_{yy}(x,y)=f(x,y) ,\qquad u(x,0)=u(x,1)=0\label{eq:differential}\end{equation}
where $x,y\in [0,1]$, $f(x,y)$ is continuous,  and $a(t)$ is twice differentiable.  Further, denote
$$a_{max}=\sup_{t\in  [0,1]} |a(t)|$$
and assume
$$a_{max} \leq 4.$$
As we shall see,  this ensures our Sylvester equation will have a unique solution.

We begin with discretizing $x$ and $y$ into $n$ equispaced points from $1/(n+1)$ to $n/(n+1)$. See, for example, \cite{faires2003numerical}, for methods to numerically compute derivatives and integrals along with their error bounds. We approximate the integral with the trapezoidal rule, and the second partial derivative with the centered difference to obtain
$$\int_0^{x_i} a(t)u(t,y)dt \approx\frac{a_1u_{1,j}+a_iu_{i,j}}{2(n+1)} +\sum_{k=2}^{i-1} \frac{a_ku_{k,j}}{n+1},\qquad \frac{\d^2}{\d y^2}u(x_i,y)\Big\rvert_{y_j} \approx (n+1)^2\left[u_{i,j-1}-2u_{i,j}+u_{i,j+1}\right].$$
Thus, the corresponding Sylvester equation is
$$\underbrace{\frac{S}{(n+1)}}_{A}U+U\underbrace{(n+1)^2T}_{-B}=F$$
where
$$S=\begin{pmatrix}\frac{a_1}2\\ \frac{a_1}2&\frac{a_2}2\\\vdots&a_2&\ddots\\\vdots&a_2&a_3&\ddots\\\frac{a_1}2&a_2&\dots&a_{n-1}&\frac{a_n}2\end{pmatrix},\qquad T=\begin{pmatrix}-2&1\\1&\ddots&\ddots\\&\ddots&\ddots&1\\&&1&-2\end{pmatrix},  \qquad a_k = a\left(\frac{k}{n+1}\right),\qquad F_{i,j} = F\left(\frac{i}{n+1},\frac j{n+1}\right).$$

The matrix $B$ is a well-known Toeplitz symmetric tridiagonal matrix with eigenvalues given by
$$\lambda_k(B) =4(n+1)^2\sin^2\left(\frac{k\pi}{2(n+1)}\right).$$
From here,  an elementary calculation using Jordan's inequality shows that if $A$ and $B$ had a common eigenvalue,  we would require that $a_j \geq 16$ for some $j$.  However,  this contradicts the assumption that $a_{max}\leq 4$, and thus we do indeed get a unique solution.

Using Jordan's inequality, we have
$$\norm{B^{-1}}=\frac1{4(n+1)^2}\frac1{\sin^2\left(\frac{\pi}{2(n+1)}\right)}\leq \frac1{4(n+1)^2}(n+1)^2=\frac14.$$
On the other hand,  notice that $\norm{A}$ is at most $a_{max}\norm{S'}/(n+1)$ where $S'$ is the $n\times n$ lower triangular matrix where each entry on and below the main diagonal is one. To determine $\norm{S'}$, we compute the smallest singular value of $S'^{-1}$. To do this, we start by noticing that
$$S'^{-1^*}S'^{-1}=\begin{pmatrix} 1&-1\\-1&2&\ddots\\&\ddots&\ddots&-1\\&&-1&2\end{pmatrix}=2I -2\underbrace{\begin{pmatrix} \frac12&\frac12\\\frac12&0&\ddots\\&\ddots&\ddots&\frac12\\&&\frac12&0\end{pmatrix}}_J.$$
Since $J$ is the Jacobi matrix associated with the Chebyshev polynomials of the fourth kind, we can see that the eigenvalues of $J$ are the roots of the $n^{\rm th}$ degree Chebyshev polynomial of the fourth kind, $W_n$. Further, see equation (18.5.4) of \cite{NIST:DLMF} for the explicit form of $W_n$ which is
$$W_n(x)=\frac{\sin\left(\left[n+\frac12\right]\cos^{-1}x\right)}{\sin\left(\frac12\cos^{-1}x\right)}.$$
Thus, the roots of $W_n$, and the eigenvalues of $J$, have the form
$$\lambda_k(J)=-\cos\left(\frac{k\pi}{n+\frac12}\right).$$
Hence,
$$\lambda_k\left(S'^{-1^*}S'^{-1}\right)=2+2\cos\left(\frac{k\pi}{n+\frac12}\right)=4\cos^2\left(\frac{k\pi}{2n+1}\right)$$
which implies
$$\norm{S'} = \frac1{2\cos\left(\frac{n}{2n+1}\pi\right)}=\frac1{2\sin\left(\frac\pi2-\frac{n}{2n+1}\pi\right)}=\frac1{2\sin\left(\frac\pi{4n+2}\right)}\leq n+\frac12$$
where we used Jordan's inequality once more. Thus,
\begin{equation}\norm{A}\leq \frac{a_{max}}{n+1}\norm{S'}=\frac{a_{max}}{2(n+1)\sin\left(\frac\pi{4n+2}\right)} \leq \frac{n+\frac12}{n+1}a_{max}\leq a_{max}\label{eq:bound1}\end{equation}
and,
$$\norm{A}\norm{B^{-1}}\leq \frac{a_{max}}{8(n+1)^3\sin\left(\frac\pi{4n+2}\right)\sin^2\left(\frac{\pi}{2(n+1)}\right)}\leq \frac14a_{max}.$$
We briefly mention that the second inequality is very weak. In fact, the middle term is approximately $2a_{max}/\pi^3$.

Next, assuming $a_{max}>0$, we take $AU-UB=F$ and divide by $a_{max}$, which gives us $A'U-UB'=F'$ where $\norm{A'}\leq 1$ and $\sigma(B')\subset [a,b]$ for
\begin{equation}\frac4{a_{max}}\leq a=\frac{4(n+1)^2\sin^2\left(\frac{\pi}{2(n+1)}\right)}{a_{max}} \leq \frac{\pi^2}{a_{max}}\label{eq:a}\end{equation}
and
\begin{equation}\frac{4n^2}{a_{max}}\leq  b=\frac{4(n+1)^2\sin^2\left(\frac{n\pi}{2(n+1)}\right)}{a_{max}} \leq \frac{n^2\pi^2}{a_{max}}.\label{eq:b}\end{equation}

Therefore, by Corollary \ref{adidilationexample}, the relative error after the $k^{\rm}$ iteration of ADI is
\begin{equation}\frac{\norm{U_k-U}}{\norm U} \leq \left(1+\frac{\sqrt  2}{a-1}\right)\left(\frac{\frac ba-1}{a+b-\frac2a}\right)^k\leq \left(1+\frac{\sqrt2}{4-a_{max}}a_{max}\right)\left(\frac{n^2\pi^2-4}{16+16n^2-2a_{max}^2}a_{max}\right)^k.\label{eq:relativeerror}\end{equation}
If we assume further that $a_{max}\leq \sqrt{8+32/\pi^2}$, then we can simplify this to
\begin{equation}\frac{\norm{U-U_k}}{\norm U} \leq  \left(1+\frac{\sqrt2}{4-a_{max}}a_{max}\right)\left(\frac{\pi^2}{16}a_{max}\right)^k.\label{eq:simplerbound}\end{equation}
If we want to ensure a relative error of $\epsilon$, we require $N$ iterations for
$$N=\Bigg\lceil \frac{\ln\left(\frac{\epsilon}{1+\frac{\sqrt2}{a-1}}\right)}{\ln\left(\frac{\frac ba-1}{a+b-\frac2a}\right)}\Bigg\rceil.$$
Alternatively, using the simpler bound in \eqs{simplerbound} we have
$$N=\Bigg\lceil \frac{\ln\left(\frac{\epsilon}{1+\frac{\sqrt2}{4-a_{max}}a_{max}}\right)}{\ln\left(\frac{\pi^2}{16}a_{max}\right)}\Bigg\rceil.$$
This proves that the number of iterations required is independent of $n$. Further,  note that the off diagonal component of $A$ has rank-one. This in turns allows the equation $Ax=b$ to be solved in linear time. Additionally, we can use Thomas' algorithm, see \cite{ford2014numerical}, to solve $B^*y=c$ in linear time as well. Therefore, the time complexity for fADI is $\mathcal{O}(n)$ if $f(x,y)$ is a low rank function.

Additionally, we can use Corollary \ref{fastdecay} to bound the decay of the singular values of $X$. If $U$ has unit norm, Corollary \ref{fastdecay} states that
\begin{equation}s_{1+2vk}(U) \leq 4\left[1+\frac{\sqrt{2}}{a-1} \right]\left[\exp\left(\frac{\pi^2}{2\ln (16\gamma)}\right)\right]^{-2k},\qquad  \gamma = \left[\frac{(a+1)(1-b)}{(1-a)(b+1)}\right]^2.\label{eq:decay}\end{equation}

Now we solve \eqs{differential} for a given $a(t)$ and $f(x,y)$.

\begin{example}
Consider 
$$\int_0^x a(t)u(t,y)dt +u_{yy}= f(x,y) ,\qquad u(x,0)=u(x,1)=0$$
for 
$$a(t)=t, \qquad f(x,y)=6x^4y(1-2y)+\frac16 x^6y^3(1-y).$$
This has solution
$$u(x,y)=x^4y^3(1-y).$$
To solve it numerically with the method above, first notice that $a_{max}=1$ and thus
$$\norm A\norm{B^{-1}} \leq \frac14.$$
Further, we have
$$\frac{\norm{U_k-U}}{\norm U} \leq  \left(1+\frac{\sqrt2}{3}\right)\left(\frac{\pi^2}{16}\right)^k.$$
Therefore, if we want to ensure a relative error of $\epsilon$, we require an amount of iterations given by
\begin{equation}N=\Bigg\lceil \frac{\ln\left(\frac{\epsilon}{1+\frac{\sqrt2}{3}}\right)}{\ln\left(\frac{\pi^2}{16}\right)}\Bigg\rceil\label{eq:required}\end{equation}
We illustrate the speed and accuracy of the fADI method in the following figure and table.

\begin{table}[H]
\centering
\begin{tabular}{ |p{2.5cm}||p{2.5cm}|p{2.5cm}|p{2.5cm}|p{2.5cm}|p{2.5cm}|}
 \hline
$n$& $k=1$ & $k=2$ & $k=3$&$k=4$&$k=5$\\
\hline

&&&&\\[-1em]
$10^1$&$0.085557058$&$0.008182140$&$0.000662944$&$0.000046905$&$0.000002980$\\
\hline
&&&&&\\[-1em]
$10^2$&$0.098737674$&$0.010548343$&$0.000938914$&$0.000071019$&$0.000004667$\\
\hline
&&&&&\\[-1em]
$10^3$&$0.099677737$&$0.010746601$&$0.000965172$&$0.000073639$&$0.000004879$\\
 \hline
\end{tabular}
\caption{We illustrate the ratio of our numerical data for the relative error with the first bound in \eqs{relativeerror} for the exact values of  $a$ and $b$ given in \eqs{a} and \eqs{b}.}
\end{table}

\begin{figure}[H]
\centering
\includegraphics[scale=0.6]{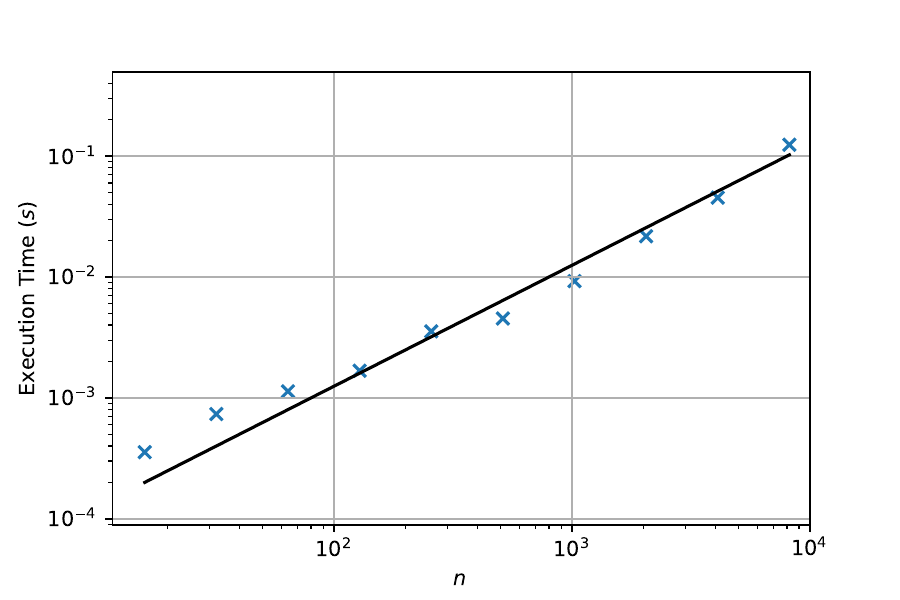}
\caption{This compares the execution time required to ensure a relative error of at most $10^{-7}$. The dots represent data for different values of $n$, and the line represents $\mathcal{O}(n)$. The number of iterations was chosen by \eqs{required}.}
\end{figure}

Moreover, we can use \eqs{decay} to bound the decay of the singular values of $U$. However, in this example, we have that $s_k(U)$ is effectively zero for any $k\geq 3$ regardless of the size of $n$.

\end{example}

\section{Conclusion}
In this paper, we began with the result by Beckermann and Townsend which says that if we have a Sylvester equation with normal coefficients and have well-separated spectra, and the rank of the right-hand side is low, then the solution has a low rank approximation. We explored the possibility of relaxing the normality condition, and thanks to Corollary \ref{fastdecay}, we were able to trade the normality of one of the coefficients for a norm condition on both coefficients. Additionally,  Theorem \ref{adidilationexample} illustrates how Sylvester equations can be solved quickly without requiring normality of both coefficients. Finally, we give two open questions.
\begin{enumerate}
\item Theorem \ref{lineandcirclebound} gives us an upper bound on the Zolotarev numbers over the unit circle and an interval. However, Figure \ref{fig:comparisons} shows that this bound is not ideal. Thanks to a M\"{o}bius transform, this problem is equivalent to finding the Zolotarev numbers over the real line and an imaginary interval. It should be noted that this problem is also motivated by the fact that if one has a Sylvester equation $AX-XB=C$ where one of $A$ and $B$ is symmetric and the other is skew-symmetric, then one needs the Zolotarev numbers over a real interval and an imaginary interval.
\item Given the Sylvester equation $AX-XB=C$ where $\norm{A}=1$, $B$ is normal and the spectrum of $B$ is contained inside the unit circle, then can we conclude anything about the decay of the singular values of $X$? This problem is well motivated for two reasons. First, numerical experiments show that this is indeed often the case. Second, this would effectively complete the case where $AX-XB=C$ and $B$ is normal. To understand why, first by unitarily diagonalizing $B$, we can assume
$$B=\begin{pmatrix}B_1&\\&B_2\end{pmatrix}$$
where the spectrum of $B_1$ is inside the unit circle, and $B_2$ is outside the unit circle, assuming the spectrum of $B$ does not intersect the unit circle. Next, we can write $AX-XB=C$ as
$$A\begin{pmatrix}X_1&X_2\end{pmatrix}-\begin{pmatrix}X_1&X_2\end{pmatrix}\begin{pmatrix}B_1&\\&B_2\end{pmatrix}=\begin{pmatrix}C_1&C_2\end{pmatrix}.$$
From here, we obtain two Sylvester equations,
$$AX_1-X_1B_1=C_1 ,\qquad AX_2-X_2B_2=C_2.$$
By Corollary \ref{fastdecay}, we can conclude $X_2$ has fast decay of its singular values. Additionally, if one can find conditions which give a positive answer to this question, then we would also have fast decay of the singular values of $X_1$. Finally, we can use the Eckart-Young theorem to show that
$$\frac{s_{2k+1}(X)}{s_\ell(X)} \leq \frac{s_{k+1}(X_1)}{s_{\ell}(X_1)}+\frac{s_{k+1}(X_2)}{s_\ell(X_2)}$$
which would prove that $X$ also has fast decay of its singular values.

We also point out that the answer to this question cannot always be positive without imposing further conditions. This follows from taking the adjoint of the Sylvester equation in example \ref{counterexample}.
\end{enumerate}

%
%
%

\bibliographystyle{IEEEtran}
\bibliography{references}
\end{document}